\numberwithin{equation}{section}
\newcommand{\tr}{\top}
\newcommand{\ee}{\mathbb E}
\newcommand{\pp}{\mathbb P}
\newcommand{\nn}{\mathbb N}
\newcommand{\rr}{\mathbb R}
\newcommand{\yy}{\mathbb Y}
\newcommand{\CC}{\mathcal C}
\newcommand{\DD}{\mathcal D}
\newcommand{\LL}{\mathcal L}
\newcommand{\RR}{\mathcal R}
\newcommand{\HH}{\mathcal H}
\newcommand{\FFF}{\mathscr F}
\newcommand{\<}{\langle}
\renewcommand{\>}{\rangle}
\allowdisplaybreaks \allowdisplaybreaks[4]
\newcommand{\dd}{\mathrm{d}}
\newcommand{\abs}[1]{\left\lvert #1 \right\rvert}
\newcommand{\norm}[1]{\left\lVert #1 \right\rVert}
\begin{document}
 \title
{Weak Error Estimates of Ergodic Approximations for Monotone Jump-diffusion SODEs\thanks{Received date: October 17, 2025, and accepted date: April 24, 2026.}}

\author{Zhihui LIU\thanks{Department of Mathematics \& National Center for Applied Mathematics Shenzhen (NCAMS) \& Shenzhen International Center for Mathematics, Southern University of Science and Technology, Shenzhen 518055, P.R. China, liuzh3@sustech.edu.cn.}
  \and Xiaoming WU\thanks{Department of Mathematics, Southern University of Science and Technology, Shenzhen 518055, P.R. China, 12331004@mail.sustech.edu.cn.}}

\pagestyle{myheadings} 
\markboth{Weak Error Estimates of Ergodic Approximations for SODEs}{Zhihui LIU and Xiaoming WU } 

\maketitle

\begin{abstract}
We first derive the exponential ergodicity of the stochastic theta method (STM) with $\theta \in (1/2,1]$ for monotone jump-diffusion stochastic ordinary differential equations (SODEs) under a dissipative condition. 
Then we establish the weak error estimates for the backward Euler method (BEM), corresponding to the STM with $\theta=1$.
In particular, the time-independent estimate for the BEM in the jump-free case gives a first-order convergence rate between the exact and numerical invariant measures, answering a question left in {\it Z. Liu and Z. Liu, J. Sci. Comput. (2025) 103:87}. 
\end{abstract}

\begin{keywords}  
jump-diffusion stochastic ordinary differential equation; numerical invariant measure; numerical ergodicity; weak error estimate 
\end{keywords}

 \begin{AMS} 
 60H35; 37M25; 65C30
\end{AMS}

\section{Introduction}

\vspace{0.5em}

While SODEs driven by Gaussian noise are widely used in physics, finance, engineering, and other scientific fields, real-world systems are often exposed to event-driven uncertainties.
Such phenomena necessitate the modeling through jump components \cite{Rong05}.
For instance, stock prices can be subject to sudden, significant movements driven by unpredictable events such as market crashes, central bank announcements, and changes in credit ratings. 
Since exact solutions of nonlinear SODEs with jumps are rarely available, numerical methods have become a powerful tool for analyzing their behavior.

The present  paper concerns the weak error analysis of ergodic approximations for the following jump-diffusion SODEs:
\begin{align}\label{sde}
    \dd X(t) = b(X(t-)) \dd t + \sigma(X(t-)) \dd W(t)+\int_Z\gamma(X(t-),z) \widetilde{N}(\dd t, \dd z), 
\end{align}
where $X(t-):=\lim_{s \rightarrow t^{-}} X(s)$.
Here $b: \rr^d \rightarrow \rr^d$, $\sigma:\rr^d \rightarrow \rr^{d \times m}$, $\gamma: \rr^d\times Z \rightarrow \rr^d$ are muturally measurable functions, and $W$ and $\widetilde{N}$ are independent $\rr^m$-valued Wiener process and compensated Poisson random measure with the jump intensity $\nu \neq 0$ and $\nu(Z)<\infty$, respectively, on a complete filtered probability space $(\Omega, \FFF, \{\FFF(t)\}_{t\geq 0}, \pp)$. 
 
The long-time behavior of Markov processes generated by SDEs is a fundamental question in many scientific areas. 
As a particularly significant form of long-term behavior, ergodicity characterizes the case of the temporal average coinciding with the spatial average. This property finds broad applications in quantum mechanics, fluid dynamics, financial mathematics, and many other fields \cite{DZ96}. 
Much progress has been made in the design of numerical methods capable of inheriting the ergodicity of Eq. \eqref{sde} with non-Lipschitz coefficients in the jump-free case (i.e., $\gamma=0$), including the tamed schemes and STM studied in, 
e.g., \cite{LMW23,Liu25,LL25,LW24,MSH02} and references therein. 
In these developments, the numerical Lyapunov structure plays a key role.
Our first main aim in this work is to establish the numerical Lyapunov structure and exponential ergodicity of the STM \eqref{stm} with $\theta \in (1/2, 1]$ under a dissipative assumption (see Theorem \ref{tm-inv}). 

To quantify the dynamical impact of the BEM, corresponding to the STM \eqref{stm} with $\theta=1$, one needs to analyze its weak error.
In the past two decades, various numerical methods have been developed and analyzed for jump-diffusion SODEs, see, e.g., \cite{CG20, DFLM19, HK07, MZ11, PS21}, most of which focus primarily on strong approximations.
It should be noted that weak error estimates rely heavily on high-order moment estimates, which are challenging for the STM because the coefficients are superlinear. 
Our second aim is to present a finite-time weak error analysis between the BEM and the jump-diffusion SODEs \eqref{sde} (see Theorem \ref{tm-weak}).
It should be pointed out that the infinite-time weak error analysis poses significant challenges and remains open, mainly due to the lack of uniform moment bounds for the STM \eqref{stm} and the uniform regularity of the corresponding Kolmogorov equation over an infinite horizon.
Our final aim is to obtain the time-independent estimate for the BEM in the jump-free case, thereby obtaining a first-order convergence between the exact and numerical invariant measures (see Corollary \ref{cor-inv}).

The paper is organized as follows.
In Section \ref{sec2}, we derive the numerical exponential ergodicity of the STM \eqref{stm} via its Lyapunov structure, Feller property, and exponential dependence estimate. 
 Section \ref{sec3} investigates the moment estimates for the BEM \eqref{bem} and the regularity of the Kolmogorov equation associated with Eq. \eqref{sde}.
The weak error analysis for the BEM is established in Section \ref{sec4}.

\section{Exponential Ergodicity of STM } 
 \label{sec2}
 
\vspace{0.5em}

Let us begin with some frequently used notations.
We denote by $\abs{\cdot}$ and $\<\cdot, \cdot\>$ the Euclidean norm and inner product in $\rr$, $\rr^d$, or $\rr^m$ if there is no confusion, and by $\norm{\cdot}$ the Hilbert--Schmidt norm in $\rr^{d \times m}$. 
 In addition, we denote by $C_b(\rr^d)$ and $\mathrm{Lip}(\rr^d)$ the Banach spaces of all bounded, uniformly continuous mappings and Lipschitz continuous mappings $\varphi:\rr^d \rightarrow \rr$ endowed with the seminorms $|\varphi|_0=\sup_{x\in\rr^d}|\varphi(x)|$ and $|\varphi|_{\mathrm{Lip}}=\sup_{x\ne y}|\varphi(x)-\varphi(y)| |x-y|^{-1}$, respectively. 
 Moreover, for a $k \in \nn_+$, denote by $C_b^k(\rr^d)$ the subspace of $C_b(\rr^d)$ consisting of all functions with bounded partial derivatives $D^i\varphi(\cdot): \rr^d \to \LL((\rr^d)^{\otimes i}; \rr)$ for $1\le i\le k$, and with the norm $|\varphi|_k:=|\varphi|_0+\sum_{i=1}^k \sup_{x\in\rr^d} \|D^i\varphi(x)\|_{\LL((\rr^d)^{\otimes i}; \rr)}$.  

Let $\tau \in (0, 1)$ be a fixed step-size.  
Our main focus is on the following STM with $\theta \in [1/2, 1]$, studied in \cite{LL25} for the jump-free case:
\begin{align} \label{stm}
   Y_{j+1} &= Y_j  + (1-\theta)\tau b(Y_j )  
    + \theta \tau b(Y_{j+1})  + 
    \sigma(Y_j ) \delta_j W
     +
    \int_{t_j}^{t_{j+1}}\int_Z\gamma(Y_j,z) \widetilde{N}(\dd s, \dd z),
    \end{align} 
    where $t_j=j \tau$, and $\delta_j W:=W(t_{j+1})-W(t_j)$, $j \in \nn$.
    This section will investigate the numerical ergodicity of \eqref{stm} by exploring its Lyapunov structure, Feller property, and exponential dependence estimate. 

We need the following coupled monotone and coercive conditions to develop the Lyapunov structure of the STM \eqref{stm}. 

\vspace{0.5em}

   \begin{assumption} \label{A1}
There exist constants $L_i>0$, $i=1,2,3$, such that for any $x,y\in \rr^d$,  
  \begin{align} 
      2\<x-y,b(x)-b(y)\>+ \|\sigma(x)-\sigma(y)\|^2 +
 &  \int_Z|\gamma(x,z)-\gamma(y,z)|^2 \nu(\dd z)  
   \le L_1 |x-y|^2, \label{mon-} \\
      2\<x,b(x)\> + \norm{\sigma(x)}^2 + 
      &  \int_Z|\gamma(x,z)|^2 \nu(\dd z)
 \le  L_2 - L_3 \abs{x}^2.  \label{coe}
  \end{align}  
\end{assumption}

Under Assumption \ref{A1} and certain integrability conditions on the coefficients, \cite{GK80} showed that Eq. \eqref{sde} admits a unique solution whose trajectories are almost surely (a.s.) c\`adl\`ag (right-continuous with left-limits).
Similarly to the jump-free case studied in \cite{LL25},  the following lemma presents the well-posedness of the STM \eqref{stm}.

\vspace{0.5em}

\begin{lem}
Under the condition \eqref{mon-} and $L_1\theta \tau<2$, the STM  \eqref{stm} can be uniquely solved a.s. and $\{Y_n\}_{n \in \nn}$ is a homogeneous $\{\FFF_n:= \FFF(t_n)\}_{n \in\nn}$-Markov chain. 
\end{lem}

\subsection{Lyapunov structure.} 
We begin with the following technical inequality, which will be helpful to explore the Lyapunov structure of the STM \eqref{stm}.

\vspace{0.5em}

\begin{lem}\label{lm-in-stm}
  Let $\theta\in(1/2,1]$ and assume \eqref{coe} holds. 
  For any $\alpha_1 \ge 0$ and $\tau \in(0,1)$ with $2\alpha_1\le L_3$ and $2\alpha_1 \theta^2 \tau \le 2\theta -1$ and for all $x \in \rr^d$, it holds that 
  \begin{align}\label{in-stm}
        &  |x+(1-\theta) \tau b(x)|^2 + \tau \Big[\|\sigma(x)\|^2+ \int_Z|\gamma(x,z)|^2 \nu(\dd z) \Big]
        \le (1-\alpha_1\tau) |x-\theta \tau b(x)|^2 +  L_2\tau.
      \end{align}   
\end{lem}

\begin{proof}
Let $x \in \rr^d$. By separating $x$ and $b(x)$ and using \eqref{coe}, we have
    \begin{align*}
       &  |x+(1-\theta) \tau b(x)|^2 + \tau \Big[\|\sigma(x)\|^2+ \int_Z|\gamma(x,z)|^2 \nu(\dd z) \Big]  -  (1-\alpha_1\tau) |x-\theta \tau b(x)|^2 - L_2\tau \\
        = & \alpha_1\tau |x|^2+\tau
         \Big[2\<x,b(x)\>- L_2+ \|\sigma(x)\|^2+ \int_Z|\gamma(x,z)|^2 \nu(\dd z)\Big] \\
& \quad - 2\alpha_1\theta\tau^2\<x,b(x)\>+ (1-2\theta+\alpha_1\theta^2\tau)|\tau b(x)|^2 \\
& \le (2\alpha_1-L_3)\tau|x|^2 + 
         (1-2\theta+2\alpha_1\theta^2\tau)|\tau b(x)|^2,
    \end{align*} 
    which  is non-positive, as $2\alpha_1\le L_3$ and $2\alpha_1 \theta^2 \tau \le 2\theta -1$.
    Then we conclude \eqref{in-stm}.
\end{proof}

\vspace{0.5em}

For $\tau \in (0, 1)$ and $\theta\in [1/2,1]$, define a function $V_\theta:\rr^d \to [0,\infty)$ by 
\begin{align}\label{lya}
      V_\theta(x) := |x-\theta\tau b(x) |^2,
      \quad x \in \rr^d.
  \end{align} 
Similarly to the proof of \cite[Theorem 1]{LL25}, we have 
$V_\theta(x) \ge  (1+\theta L_3 \tau)|x|^2-L_2\theta \tau,$
      which shows that $\lim_{x \to \infty} V_\theta(x)=\infty$; therefore, $V_\theta$ is a Lyapunov function.
      
We have the following strong Lyapunov structure for the STM \eqref{stm} with $\theta\in (1/2,1]$, corresponding to $V_\theta$ defined by \eqref{lya}.
Throughout, we assume that $X_0$ is $\FFF_0$-measurable and we use $\ee_n[\cdot]:=\ee[\cdot|\FFF_n]$ to denote the conditional expectation with respect to $\FFF_n$.

\vspace{0.5em}

\begin{thm} \label{tm-lya}
Assume $\ee V_\theta(X_0)<\infty$ and let Assumption \ref{A1}  hold. 
  For any $\tau \in (0,1) $ and $\theta\in (1/2,1]$ satisfying $L_1\theta \tau<2$ and $n \in \nn$,   
  \begin{align}\label{lya+}
      \ee [V_\theta(Y_{n+1})\mid \FFF_{n}] 
      \le  [1-(2\theta-1)L_3\tau/2 ]V_\theta (Y_n) +  L_2 \tau.
  \end{align}  
\end{thm} 
\begin{proof}
Let $n \in \nn$.
To lighten the notation, we write $b_n:=b(Y_n)$, $\sigma_n:=\sigma(Y_n)$, $\gamma_n(z):=\gamma(Y_n,z)$, and $\delta_n \widetilde{N}(Y_n):=\int_{t_n}^{t_{n+1}}\int_Z \gamma_n(z) \widetilde{N}(\dd s, \dd z)$.
From \eqref{stm} we observe that
\begin{align} \label{eq-stm}
   |Y_{n+1}-\theta b_{n+1} \tau|^2 
  &=  |Y_n + (1-\theta) b_n \tau  + \sigma _n \delta_nW
  + \delta_n \widetilde{N}(Y_n)|^2 \nonumber\\ 
    &=|Y_n +(1-\theta )b_n \tau|^2
    + \tau \Big[\|\sigma_n\|^2 + \int_Z |\gamma_n(z)|^2\nu(\dd z) \Big] + H_n,
\end{align}
where 
\begin{align*}
H_n :&= 2 \<Y_n + (1-\theta)b_n \tau,\sigma_n
  \delta_n W+ \delta_n \widetilde{N}(Y_n)\> 
  +|\sigma _n \delta_nW + \delta_n \widetilde{N}(Y_n)|^2 \\
  & \quad - \tau \Big[\|\sigma_n\|^2 +\int_Z|\gamma_n(z)|^2\nu(\dd z)\Big].
\end{align*}

Because the Wiener and Poisson processes are mutually independent and the compensated Poisson random measure is a martingale, we have $\ee_n[H_n]=0$.
Then, taking the conditional expectation on both sides of the equality \eqref{eq-stm}, we conclude \eqref{lya+} by applying Lemma \ref{lm-in-stm} with $\alpha_1=(2\theta-1)L_3/2$ under the condition $L_3\theta^2\tau\le1$ (so that $2\alpha_1\le L_3$ and $2\alpha_1 \theta^2 \tau \le 2\theta -1$ hold). 

Note that the previous condition $L_3\theta^2\tau\le1$ can be relaxed because \eqref{coe} yields the same inequality with $L_3$ replaced by $L_3'$ for any positive $L_3'<L_3$ (so that one can always take $L_3=L_3'$ such that $L_3' \theta^2\tau\le1$).
     \end{proof}

\vspace{0.5em}

Similarly to \cite{LL25}, we have the following weak Lyapunov structure for the mid-point scheme corresponding to the STM \eqref{stm} with $\theta=1/2$.  

\vspace{0.5em}

\begin{cor}  \label{cor-lya}
Assume $\ee V_{1/2}(X_0)<\infty$ and let the condition \eqref{coe} hold. 
  For any $\tau \in (0,1) $ with $L_1 \tau < 4$ and $n \in \nn$, 
  \begin{align*}
      \ee [V_{1/2}(Y_{n+1})\mid \FFF_{n}] \le 
      V_{1/2} (Y_n) +  L_2 \tau-L_3|Y_n|^2.
  \end{align*}  
\end{cor}

\vspace{-1em}

 \subsection{Feller Property of STM.} 
Denote by $\{Y_k^x: k\in\nn\}$ and $\{Y_k^y: k\in\nn\}$ the solution of the STM \eqref{stm} starting from $x$ and $y$, respectively.

The following Feller property of the STM \eqref{stm}, in combination with the Lyapunov structure given in Theorem \ref{tm-lya} and Corollary \ref{cor-lya}, guarantees the existence of an invariant measure of \eqref{stm} for $\theta\in [1/2,1]$
(see \cite[Theorem 7.1 and Proposition 7.10]{DA06}).
  
\vspace{0.5em}

 \begin{thm} \label{tm-fel}
Assume Assumption \ref{A1} holds.  
Then for any $\tau \in (0,1)$ and $\theta\in [1/2,1]$ with $L_1\theta \tau \le 1/2$ and $n \in \nn$, 
  \begin{align}\label{fel}
      \ee|Y_n^x-Y_n^y|^2   
      &\le \frac{e^{2L_1t_n}}{1-L_1\theta\tau} |(x-y)- \theta \tau [b(x)-b(y)]|^2.
  \end{align} 
  Consequently, the corresponding discrete Markov semigroup of the STM \eqref{stm} is Feller, provided $b$ is continuous, and possesses an invariant measure.
\end{thm}

\vspace{0.5em}

\begin{proof}
As in the proof of Theorem \ref{tm-lya}, set $b_n^x:=b(Y_n^x), b_n^y:=b(Y_n^y),\sigma_n^x:=\sigma(Y_n^x),\sigma_n^y:=\sigma(Y_n^y),\gamma_n^x(z):=\gamma(Y_n^x,z),\gamma_n^y(z):=\gamma(Y_n^y,z)$  and $E_n:=Y_n^x-Y_n^y$.  Similarly, $E_n(b):=b_n^x-b_n^y$, $E_n(\sigma):=\sigma_n^x-\sigma_n^y$, $E_n(\gamma):=\gamma_n^x(z)-\gamma_n^y(z)$, and $\delta_n \widetilde{N}:=\int_{t_n}^{t_{n+1}}\int_Z E_n(\gamma) \widetilde{N}(\dd s, \dd z)$.

By \eqref{stm}, we have
\begin{align*}
 &\quad |E_{n+1}-\theta\tau E_{n+1}(b)|^2  \\  
  &=  |E_n + (1-\theta)\tau E_n(b) |^2 + |E_n(\sigma)\delta_n W|^2 
  + |\delta_n \widetilde{N}|^2 \\
  &\quad + 2 \<E_n + (1-\theta)\tau E_n(b) ,E_n(\sigma)
  \delta_n W+ \delta_n \widetilde{N}\>
  +2 \<E_n(\sigma) \delta_n W, \delta_n \widetilde{N}\> \\
 & =  |E_n - \theta\tau E_n(b)|^2 + 
  2 \<E_n,E_n(b)\> \tau +|\delta_n \widetilde{N}|^2
  + |E_n(\sigma)\delta_n W|^2 +(1-2\theta)\tau|E_n(b)|^2 \\
  &\quad   + 2 \<E_n + (1-\theta)\tau E_n(b),E_n(\sigma)
  \delta_n W+ \delta_n \widetilde{N}\>
  +2 \<E_n(\sigma) \delta_n W, \delta_n \widetilde{N}\>.
\end{align*}
Taking the conditional expectation on both sides of the above equality and using It\^o isometry, the condition \eqref{mon-}, the fact $\theta \ge 1/2$, and the assumption $2 L_1\theta \tau \le 1$, we have
 \begin{align*}
 & \ee_n |E_{n+1}-\theta\tau E_{n+1}(b)|^2  \\  
 &=|E_n - \theta\tau E_n(b)|^2 + 
  (1-2\theta)|E_n(b)\tau|^2
   + \tau \Big[2 \<E_n,E_n(b)\> + \|E_n(\sigma)\|^2 
 + \int_Z|E_n(\gamma)|^2\nu(\dd z) \Big] \\
 &\le |E_n - \theta\tau E_n(b)|^2+L_1\tau|E_n|^2 \\
& = (1+2L_1\tau)|E_n - \theta\tau E_n(b)|^2 
-L_1\tau [|E_n|^2 -4 \<E_n, \theta\tau E_n(b)\>
+ 2 |\theta\tau E_n(b)|^2] \\ 
& \le (1+2L_1\tau)|E_n - \theta\tau E_n(b)|^2- (1-2 L_1\theta \tau)L_1\tau|E_n|^2 \\
& \le (1+2L_1\tau)|E_n - \theta\tau E_n(b)|^2.
 \end{align*} 
Then we conclude \eqref{fel} from  the elementary inequality $(1+x)^n < e^{nx}$, $\forall~ n \in \nn_+,~x \ge 0$, and $|E_{n+1}-\theta\tau E_{n+1}(b)|^2 \ge (1-L_1 \theta \tau) |E_n|^2$ followed from \eqref{mon-}. 
  \end{proof}

 \vspace{0.5em}
 
 \subsection{Exponential Ergodicity of STM.}
  In this part, we focus on the exponential ergodicity of the STM \eqref{stm} under the following dissipative condition, which is stronger than Assumption \ref{A1}.

\vspace{0.5em}

  \begin{assumption} \label{A2}
There exists a positive constant  $L_1^*$ such that for all $x,y\in \rr^d$,  
  \begin{align*} 
 2\<x-y,b(x)-b(y)\>+ \|\sigma(x)-\sigma(y)\|^2 +
  \int_Z|\gamma(x,z)-\gamma(y,z)|^2 \nu(\dd z)\le -L_1^* |x-y|^2.
  \end{align*} 
\end{assumption}

\vspace{-0.5em}

In the continuous case, applying  It\^o formula implies that $\{e^{C_0 t}|X_t^x-X_t^y|^2: t \ge 0\}$ is a supermartingale for some positive constant $C_0$.
However, it remains unknown whether $\{e^{C_0 t_n}|Y_n^x-Y_n^y|^2: n \in \nn\}$ is a discrete supermartingale.
Fortunately, under  Assumption  \ref{A2}, we obtain the following uniform continuity dependence estimate for the STM \eqref{stm}.

\vspace{0.5em}

 \begin{thm} \label{tm-depen}
  Let Assumption \ref{A2} hold.
For any $\tau \in (0, 1)$ and $\theta\in (1/2,1]$ with $L_1^* \theta^2\tau\le 3-2\theta$ and $n \in \nn$,
  \begin{align}\label{depen+}
      \ee |Y_n^x-Y_n^y|^2 
      \le \frac{e^{- (\theta-1/2)L_1^* t_n}}{1+L_1^*\theta\tau} |(x-y)- \theta \tau [b(x)-b(y)]|^2.
  \end{align}  
\end{thm}

\begin{proof}
As $\theta \le 1$ and $L_1^*\tau \theta^2 \le 3-2\theta$, adopting the same notations and techniques as in the proof of Theorem \ref{tm-fel}, we have 
\begin{align*}
 &\quad \ee_n |E_{n+1}-\theta\tau E_{n+1}(b)|^2  \nonumber \\
  &\le |E_n -\theta\tau E_n(b)|^2 +(1-2\theta)|\tau E_n(b) |^2-L_1^*\tau|E_n|^2 \\
  & = [1-(\theta-1/2)L_1^*\tau]  |E_n -\theta \tau E_n(b) |^2 - (3/2-\theta)L_1^* \tau|E_n|^2  \\
         &\quad  -(2\theta-1)[L_1^*\tau \<E_n,\theta\tau E_n(b)\>+(1-L_1^* \theta^2 \tau/2)|\tau E_n(b)|^2] \\
& = [1-(\theta-1/2)L_1^*\tau]  |E_n -\theta\tau E_n(b) |^2 
+\frac{2\theta-1}{3-2\theta} [L_1^*\tau \theta^2-(3-2\theta)] |E_n|^2 \\
         &\quad  - (3/2-\theta)L_1^*\tau \Big|E_n+\frac{2\theta-1}{3-2\theta} \theta \tau E_n(b) \Big|^2  \\
         & \le [1-(\theta-1/2)L_1^*\tau]  |E_n -\theta\tau E_n(b) |^2.
\end{align*}
This shows \eqref{depen+} using a similar idea in the last part of the proof of Theorem \ref{tm-fel}. 
  \end{proof}

\vspace{0.5em}

\begin{thm}\label{tm-inv}
Let Assumption \ref{A2} hold and assume that $b$ is continuous.
For any $\tau \in (0, 1)$ and $\theta\in (1/2,1]$ with $L_1^* \theta^2\tau\le 3-2\theta$, the STM  \eqref{stm} is exponentially mixing with respect to a unique invariant measure, denoted by $\pi_\tau$.
\end{thm}

\vspace{0.5em}

\begin{proof}
By Theorem \ref{tm-fel}, the STM \eqref{stm} possesses an invariant measure $\pi_\tau$.
For any $\varphi \in {\rm Lip}(\rr^d)$, the continuity dependence estimate \eqref{depen+} implies 
\begin{align*}
|\ee\varphi(Y_n^x)-\pi_\tau(\varphi)|
=\Big| \int_{\rr^d} [\varphi(Y_n^x)-\varphi(Y_n^y)] \pi_\tau(\dd y)\Big|
\le \sqrt{\frac{e^{- (\theta-1/2)L_1^* t_n}}{1+L_1^*\theta\tau}} |\varphi|_{\mathrm{Lip}}c(x),
\end{align*}
for some nonegative measurable function $c: \rr \to \rr_+$. 
This shows the exponential mixing and thus the uniqueness of the invariant measure of \eqref{stm}. 
\end{proof}

\section{ Estimates of BEM and Kolmogorov Equation}
\label{sec3}

\vspace{0.5em}

In this section, we aim to give a high-order moment estimate of the BEM (i.e., the STM \eqref{bem} with $\theta=1$) 
\begin{align}\label{bem}
   Y_{j+1} = Y_j   
    + \ b(Y_{j+1}) \tau + 
    \sigma(Y_j ) \delta_j W +
    \int_{t_j}^{t_{j+1}}\int_Z\gamma(Y_j,z) \widetilde{N}(\dd s, \dd z),
    \quad j \in \nn,
    \end{align}  
and the regularity of the (backward) Kolmogorov equation associated to Eq. \eqref{sde} on any finite time interval $[0,T]$, where $T$ is a given positive number.
Additionally, we intend to derive uniform moment and regularity estimates in the jump-free case.

We need the following monotonicity and Lipschitz continuity conditions on the coefficients in Eq. \eqref{sde} for these two purposes.

\vspace{0.5em}
 
\begin{assumption} \label{A3}
There exist positive constants $L_4$ and $L_5$ such that for any $x, y \in \rr^d$ and $z\in Z$,
\begin{align}
\<x-y,b(x)-b(y)\>\le L_4|x-y|^2,  \label{coup}\\ 
\|\sigma(x)-\sigma(y)\|^2 + 
|\gamma(x,z)-\gamma(y,z)|^2 
& \le L_5|x-y|^2.   \label{sg-lip}
\end{align}
\end{assumption}

\begin{rem}\label{rk-coe}
(1) Under Assumption \ref{A3}, the solution $X$ of Eq. \eqref{sde} possesses bounded moments provided that $\ee|X_0|^{2p} < \infty$ for some $p \in \nn_+$ (see \cite{CG20}):
 \begin{align*}
 \ee|X(t)|^{2p}\le e^{Ct}(1+\ee|X_0|^{2p}), \quad t \in [ 0, T].
 \end{align*} 
 
(2) By \eqref{sg-lip}, there exists a positive constant $L_6$ such that for any $z\in Z$,
 \begin{align} \label{sg-grow}
   \|\sigma(x)\|^2 + |\gamma(x,z)|^2 
   &\le L_6(1+|x|^2), \quad x \in \rr^d. 
 \end{align}
If $b$, $\sigma, \gamma(\cdot, z) \in \CC^1(\rr^d)$ for any $z \in Z$, then for any $x, y \in \rr^d$,  
 \begin{align}  
 \<y, Db(x) y\> \le L_4 |y|^2,  \label{D-b}\\ 
\|D \sigma(x) y\|^2 +  |D \gamma(x, z) y|^2 
& \le L_5|y|^2.  \label{D-v}
\end{align}   
\end{rem}

Furthermore, the following assumption is required to derive a uniform moment bound for the BEM \eqref{bem} applied to the jump-free SODE \eqref{sde}.

\vspace{0.5em}

\begin{assumption} \label{A3*}
There exists a positive constant $L_4^*$ such that
\begin{align}
\label{coup-}
\<x-y,b(x)-b(y)\>\le -L_4^*|x-y|^2, \quad x,y\in\rr^d.
\end{align}
\end{assumption}

\begin{rem}
(1) Under the conditions \eqref{coup-} and \eqref{sg-lip}, the solution $X$ of jump-free Eq. \eqref{sde} possesses uniform bounded moments provided that  $2L_4^*>(2p-1)L_5$ and $\ee|X_0|^{2p} < \infty $ for some $p \in \nn_+$ (see \cite{WW25}):
 \begin{align*}
 \ee|X(t)|^{2p}\le C(1+\ee|X_0|^{2p}), \quad t \ge 0.
 \end{align*} 

(2) By \eqref{coup-} and \eqref{sg-lip} with $2L_4^*>(2p-1)L_5$, there exist  positive constants $L_7$, $L_8$, and $L_9=2L_4^*-(2p-1)L_5$ such that for any $x, y \in \rr^d$,
 \begin{align} 
 \<x,b(x)\> \le L_7-L_8|x|^2,  \label{coe-}  \\  
2\<y, Db(x)y\>+(2p-1) \|D \sigma(x) y\|^2 & \le -L_9|y|^2. \label{D-b+}
\end{align}
\end{rem}
 
 \vspace{-1em}

\subsection{Moment Estimates.} 
We begin with the following moment estimates of the BEM   \eqref{bem}. 
Here and what follows, we use $C$ to denote a generic constant independent of $n$ and $\tau$ that may differ in each appearance.

\vspace{0.5em}

\begin{lem} \label{lm-Y}
Assume $\ee|X_0|^{2p }<\infty$ with $p\in \nn_+$ and let Assumption \ref{A3} hold.
There exists a constant $C$ such that 
\begin{align} \label{Y-est}
\ee|Y_n|^{2p} \le e^{C t_n} (1+ \ee|X_0|^{2p}), \quad n \in \nn.
\end{align} 
Assume that $\gamma=0$ and the condition \eqref{coe-} holds for $L_8>L_6\cdot  C_p$ with 
$C_p=\max \{ \frac{2^{2p}-2p-1}{2p}, (\frac{2^{2p}-2p-1}{2^p}(2p-1)!!)^{1/p}\}.$
Then 
\begin{align}
\label{Y-est+}
\ee|Y_n|^{2p} \le C (1+ \ee|X_0|^{2p}), \quad n \in \nn.
\end{align} 
\end{lem} 
\begin{proof}
For $n \in \nn$ and $p\in \nn_+$, using the elementary inequality $|\beta+\gamma|^k\le 2^{k-1}(|\beta|^k+|\gamma|^k)$ for any $\beta, \gamma \in \rr$ and $k \in \nn_+$, we have
\begin{align*}
& |Y_{n+1}-b(Y_{n+1})\tau|^{2p} \\
&=|Y_n + 
    \sigma(Y_n ) \delta_nW +
    \delta_n \widetilde{N}(Y_n) |^{2p} \\
   & \le|Y_n|^{2p}+2p|Y_n|^{2p-2}\<Y_n,  \sigma(Y_n ) \delta_nW +   \delta_n \widetilde{N}(Y_n)\> \\
    &\quad+\sum_{k=2}^{2p}\binom{2p}{k}|Y_n|^{2p-k}|\delta_n \widetilde{N}(Y_n)+\sigma(Y_n ) \delta_nW|^k \\
    &\le |Y_n|^{2p}+2p|Y_n|^{2p-2}\<Y_n,  \sigma(Y_n ) \delta_nW +
    \delta_n \widetilde{N}(Y_n)\> \\
  &\quad+\sum_{k=2}^{2p}2^{k-1}\binom{2p}{k}|Y_n|^{2p-k}| \delta_n \widetilde{N}(Y_n)|^k
  +\sum_{k=2}^{2p}2^{k-1}\binom{2p}{k}|Y_n|^{2p-k}| \sigma(Y_n ) \delta_nW|^k.
\end{align*}
Taking the conditional expectation on both sides of the above inequality and using the condition \eqref{sg-grow}, we obtain
 \begin{align*}
\ee_n|Y_{n+1}-b(Y_{n+1})\tau|^{2p} &\le (1+C\tau)|Y_n|^{2p}+C\tau.
\end{align*}
According to the condition \eqref{coup} and the above estimate, we conclude \eqref{Y-est}.

To show \eqref{Y-est+}, testing \eqref{bem} with $Y_{n+1}$ and using the elementary equality
$\<\beta-\gamma, \gamma\>=\frac12 (|\beta|^2-|\gamma|^2) + \frac12 |\beta-\gamma|^2$ for all $\beta, \gamma \in \rr^d$,
we have
\begin{align*}
&\frac{1}{2}(|Y_{n+1}|^2-|Y_{n}|^2)+\frac{1}{2}|Y_{n+1}-Y_n|^2 \\
&=\<Y_{n+1},b(Y_{n+1})\>\tau+\<Y_{n+1}-Y_n,\sigma(Y_n)\delta_nW\>+\<Y_n,\sigma(Y_n)\delta_nW\>.
\end{align*}
By the condition \eqref{coe-}, we see
\begin{align*}
\frac{1}{2}(|Y_{n+1}|^2-|Y_{n}|^2)
\le  -L_8\tau|Y_{n+1}|^2+L_7\tau + \frac{1}{2}|\sigma(Y_n)\delta_nW|^2
+\<Y_n,\sigma(Y_n)\delta_nW\>,
\end{align*} 
from which we derive
\begin{align*}
(1+2L_8\tau) \ee_n |Y_{n+1}|^2
& \le  2L_7\tau + \ee_n |Y_n+\sigma(Y_n)\delta_nW|^2.
\end{align*}    
Taking the $p$-th power on both sides of the above inequality, taking conditional expectations, and using  Young inequality, we obtain
 \begin{align*}
(1+2L_8\tau)^p\ee_n|Y_{n+1}|^{2p}
 & \le \ee_n (2L_7\tau +|Y_n+\sigma(Y_n)\delta_nW|^2)^p \\
 & \le \ee_n|Y_n+\sigma(Y_n)\delta_nW|^{2p} 
 +C\tau |Y_n|^{2p-2}+C \tau^2.
 \end{align*}
It follows from \eqref{sg-grow} and  Young inequality that 
\begin{align*}
& \ee_n|Y_n+\sigma(Y_n)\delta_nW|^{2p}\\
 &\le |Y_n|^{2p}+2p\ee_n[|Y_n|^{2p-2}\<Y_n,\sigma(Y_n)\delta_nW\>]+
\sum_{j=2}^{2p}\binom{2p}{j} \ee_n[|Y_n|^{2p-j}|\sigma(Y_n)\delta_nW|^j] \\
 &= |Y_n|^{2p}+\sum_{j=0}^{2p-2}\binom{2p}{j+2} \ee_n[|Y_n|^{2p-2-j}|\sigma(Y_n)\delta_nW|^j|\sigma(Y_n)\delta_nW|^2] \\
&\le  |Y_n|^{2p}+\sum_{j=0}^{2p-2}\binom{2p}{j+2} \frac{2p-2-j}{2p-2}\ee_n[|Y_n|^{2p-2}|\sigma(Y_n)\delta_nW|^2] \\
&\quad+\sum_{j=0}^{2p-2}\binom{2p}{j+2} \frac{j}{2p-2}\ee_n|\sigma(Y_n)\delta_nW|^{2p}\\
& \le |Y_n|^{2p}+\sum_{j=0}^{2p-2}\binom{2p}{j+2}(L_6\tau+L_6^p(2p-1)!!\tau^{p}) |Y_n|^{2p} 
 +C\tau|Y_n|^{2p-2} \\
 &=|Y_n|^{2p}+(2^{2p}-2p-1)(L_6\tau+L_6^p(2p-1)!!\tau^{p}) |Y_n|^{2p} 
 +C\tau|Y_n|^{2p-2}.
\end{align*}

It is not difficult to show $(2^{2p}-2p-1)(L_6\tau+L_6^p(2p-1)!!\tau^{p})<\sum_{j=1}^p\binom{p}{j}(2L_8)^j\tau^j$ when $L_8>L_6 \cdot C_p$. Then taking $2\kappa:=\sum_{j=1}^p\binom{p}{j}(2L_8)^j\tau^{j-1}-(2^{2p}-2p-1)(L_6+L_6^p(2p-1)!!\tau^{p-1}) (1+\sum_{j=1}^p\binom{p}{j}(2L_8)^j\tau^j)^{-1}$ and using Young inequality, we have
\begin{align*}
\ee_n|Y_n|^{2p}
\le (1-2\kappa\tau)|Y_n|^{2p}+C\tau|Y_n|^{2p-2} +C \tau^2 
\le (1-\kappa\tau)|Y_n|^{2p}+C\tau.
\end{align*}
 By iteration, we derive \eqref{Y-est+}.
\end{proof}

\subsection{Regularity of Kolmogorov Equation.}
To obtain a weak error estimate in terms of 
$|\ee \varphi(X(T)-\ee \varphi(Y_N)|$,
for a class of test functions $\varphi:\rr^d\rightarrow \rr$, we introduce the function $u:[0,T] \times \rr^d \rightarrow \rr$ defined by
\begin{align}
\label{u-form}
u(t, x):=\ee \varphi(X_t^x),
\quad t \ge 0, ~ x \in \rr^d.
\end{align} 
According to \cite[Lemma 12.3.1]{PB10}, the function $u$ defined in \eqref{u-form} satisfies  the following Kolmogorov equation associated with Eq. \eqref{sde}:
\begin{align} \label{kol}
\left\{
\begin{aligned}
\partial_tu(t,x)&=\int_Z[
u(t,x+\gamma(x,z))-u(t,x) 
-Du(t,x)\gamma(x,z)] \nu(\dd z) \\
&\quad +Du(t,x)b(x)+\frac{1}{2}D^2u(t,x)\sigma^2(x) , \quad (t,x) \in (0, \infty) \times \rr^d, \\
u(0,x)&=\varphi(x), \quad x \in \rr^d.
\end{aligned}
\right.
\end{align}

To derive the required regularity of the Kolmogorov equation \eqref{kol}, we impose the non-degeneracy hypothesis of $\sigma$ and assume that the coefficients of Eq. \eqref{sde} have continuous partial derivatives up to third order, which grow polynomially. 
 
\vspace{0.5em}

 \begin{assumption} \label{A4}
 For each $x\in\rr^d$, $\sigma(x)\sigma(x)^{\tr}\in\rr^{d\times d}$ is positive definite.
Moreover, $b, \sigma_j, \gamma(\cdot, z) \in \CC^3(\rr^d)$, for all $j \in\{1,\cdots,m\}$ and  $z\in Z$, and there exist constants $C>0$ and $q \ge 3$ such that  
    \begin{align} \label{D3-v}
& | D^3 b(x)(v_1, v_2, v_3)| 
            + |D^3\sigma_j(x)(v_1,v_2,v_3)|
             + |D^3 \gamma(x,z)(v_1,v_2,v_3)| \nonumber\\
            & \le C (1+|x|)^{q-3} |v_1|  |v_2| |v_3|,
            \quad x,v_1,v_2,v_3 \in \rr^d.    
\end{align}
   \end{assumption}

\begin{rem}
 The existence and the uniqueness of the derivatives up to third order of the solution to Eq. \eqref{sde} can be derived similarly to shown in \cite[Section 1.3]{Cerrai01}, under Assumptions \ref{A3} and \ref{A4}. 
\end{rem}

\vspace{0.5em}

Before moving on,  we define a function  $\HH_{\cdot}(\cdot): \rr\times\rr^d\rightarrow[1,\infty)$: for $k \in \rr_+$ and $x\in\rr^d$, 
\begin{align*} 
\HH_k(x) := (1+|x|)^k,
\end{align*}
and introduce the following Gronwall-type inequality (see, e.g., \cite{GWW25}). 

\vspace{0.5em}

\begin{lem}
\label{lm-cite}
Let $h\in\rr$ and $f$ and $g$ be nonnegative continuous functions on $[h,\infty)$.  If there exists a positive constant $c$ such that
\begin{align*}
f(t)-f(s)\le -c\int_s^t f(u)\dd u+\int_s^tg(u)\dd u, \quad h\le s\le t <\infty,
\end{align*}
then
$f(t)\le f(h)+\int_h^t e^{-c(t-u)}g(u)\dd u$, $t \ge h$.
\end{lem}

\vspace{0.5em}

We have the following a priori estimates for the derivatives of the exact solution of Eq. \eqref{sde}.
For convenience, we denote
$|f|_{L_\omega^p}:=(\int_\Omega f(\omega) \pp(\dd \omega))^{1/p}$
for $f \in L^p(\Omega; \rr)$ or $f \in L^p(\Omega; \rr^d)$ with $p \ge 1$.

\vspace{0.5em}

\begin{lem} \label{lm-DX}
Let Assumptions \ref{A3} and \ref{A4} hold, $T>0$, $1 \le p_2 < p_1 \le p$, and $\rho_j > 0$ for $j \in \{1, \cdots, 6\}$ satisfy $\sum_{j=1}^3 1/ \rho_j=\sum_{j=4}^61/ \rho_j=1$.
 Then for any $t \in [0, T]$, there exists a constant $C > 0$ such that for any 
$v_1 \in L_\omega^{\max\{2p_1, 2\rho_2 p_2\}}$, 
$v_2 \in L_\omega^{\max\{2\rho_3 p_2, 2 \rho_3 \rho_5\}}$, and  
$v_3 \in L_\omega^{2 \rho_3 \rho_6}$,
\begin{align}
|\DD X_t^{\cdot} v_1 |_{L_\omega^{2p_1}} 
&\le  e^{C  t} |v_1|_{L_\omega^{2p_1}}, \label{DX-est} \\ 
| \DD^2 X_t^{\cdot}(v_1, v_2) |_{L_\omega^{2p_2}}
      &\le e^{C t}\sup_{s \in[0,T]}|\HH_{q-2}(X_s^\cdot)|_{L_\omega^{2\rho_1p_2}}
      |v_1|_{L_\omega^{2\rho_2p_2}}|v_2|_{L_\omega^{2\rho_3p_2}},\label{D2X-est} \\
|\DD^3 X_t^{\cdot}(v_1, v_2,v_3)|_{L_\omega^2}
&\le  e^{C  t}\sup_{s \in[0,T]}|\HH_{q-2}(X_s^\cdot)|_{L_\omega^{\max\{2\rho_1,2\rho_3\rho_4\}}}|v_1|_{L_\omega^{2\rho_2}} 
|v_2|_{L_\omega^{2\rho_3\rho_5}}|v_3|_{L_\omega^{2\rho_3\rho_6}}.
\label{D3X-est}
\end{align} 
Moreover, if $\gamma=0$ and Assumption \ref{A3*} holds with $2L_4^*>(2p-1)L_5$ for some $p \in \nn_+$,  there exist constants $C, \alpha>0$ such that
\begin{align}
|\DD X_t^{\cdot} v_1 |_{L_\omega^{2p_1}} 
&\le  e^{-\alpha t} |v_1|_{L_\omega^{2p_1}}, \label{DX-est+}\\
| \DD^2 X_t^{\cdot}(v_1, v_2) |_{L_\omega^{2p_1}}
&\le C e^{-\alpha t}\sup_{s\ge0}|\HH_{q-2}(X_s^\cdot)|_{L_\omega^{2\rho_1p_2}}
|v_1|_{L_\omega^{2\rho_2p_2}} |v_2|_{L_\omega^{2\rho_3p_2}},\label{D2X-est+} \\
|\DD^3 X_t^{\cdot}(v_1, v_2,v_3)|_{L_\omega^2}  
&\le  C e^{-\alpha t}\sup_{s\ge0} |\HH_{q-2}(X_s^\cdot)|_{L_\omega^{\max\{2\rho_1,2\rho_3\rho_4\}}}|v_1|_{L_\omega^{2\rho_2}}
|v_2|_{L_\omega^{2\rho_3\rho_5}}|v_3|_{L_\omega^{2\rho_3\rho_6}}. 
\label{D3X-est+}
\end{align}  
\end{lem}

\begin{proof} 
In what follows, for all $x, v_1, v_2, v_3\in \rr^d$, we denote 
\begin{align*}
    \eta^{v_1}(t, x) := \DD X_t^x v_1,  \quad 
    \xi^{v_1, v_2}(t, x) :=  \DD^2 X_t^x(v_1, v_2), \quad 
    \zeta^{v_1, v_2,v_3}(t, x) :=  \DD^3 X_t^x(v_1, v_2,v_3).
\end{align*} 
We omit the variable $x$ of $\eta$, $\xi$, and $\zeta$  for brevity if there is no confusion.

For $\dd\eta^{v_1}(t)$, we have $\eta^{v_1}(0)=v_1$ and 
\begin{align*}
    \dd\eta^{v_1}(t)&=Db(X_t)\eta^{v_1}(t) \dd t+\sum_{j=1}^mD\sigma_j(X_t)\eta^{v_1}(t)\dd W_j(t)
    + \int_Z D\gamma(X_t,z)\eta^{v_1}(t) \widetilde{N}(\dd t,\dd z).
\end{align*}
Define a stopping time as
\begin{align*}
\tau_n^1:=\inf\{s\in[0,t]:|\eta^{v_1}(s, \cdot)|>n \quad \mathrm{or} \quad |X_s^{\cdot}|>n\}.
\end{align*}
Applying It\^o formula (see \cite[Theorem 93]{Rong05}), and  the elementary inequality 
\begin{align*}
 |x+y|^{2r_1}-|x|^{2r_1}-2r_1|x|^{2r_1-2} \<x,y\>\le C(|x|^{2r_1}+|y|^{2r_1}),
\end{align*}
for any $x, y \in \rr^d$ and $r_1\in \nn_+$, we obtain 
\begin{align} \label{1var-Ito}
& \ee|\eta^{v_1}(t\wedge\tau_n^1)|^{2p_1} \nonumber  \\ 
&\le \ee|v_1|^{2p_1} + 2 p_1 \ee\int_{0}^{t\wedge\tau_n^1}
|\eta^{v_1}|^{2 p_1-2}\<\eta^{v_1},Db(X)\eta^{v_1}\>\dd s \nonumber \\
&\quad+p_1(2p_1-1) \ee \int_{0}^{t\wedge\tau_n^1}|\eta^{v_1}|^{2 p_1-2} \|D\sigma(X)\eta^{v_1}\|^2\dd s  \nonumber \\
&\quad+\ee\int_{0}^{t\wedge\tau_n^1}\int_Z[ |\eta^{v_1}+D\gamma(X,z)\eta^{v_1}|^{2p_1} -|\eta^{v_1}|^{2p_1}  \nonumber \\
&\quad-2p_1|\eta^{v_1}|^{2 p_1-2}\<\eta^{v_1},D\gamma(X,z)\eta^{v_1}\>] \nu(\dd z)\dd s \nonumber \\
&\le \ee|v_1|^{2p_1} + C \ee\int_{0}^{t\wedge\tau_n^1}|\eta^{v_1}|^{2p_1} \dd s+C\ee\int_{0}^{t\wedge\tau_n^1}\int_Z |D\gamma(X,z)\eta^{v_1}|^{2p_1} 
\nu(\dd z)\dd s \nonumber \\
& \quad + p_1 \ee \int_{0}^{t\wedge\tau_n^1} |\eta^{v_1}|^{2 p_1-2} 
\Big[ 2\<\eta^{v_1},Db(X)\eta^{v_1}\> + (2p_1-1) \|D\sigma(X)\eta^{v_1}\|^2 \Big] \dd s,
\end{align}
where we omit the integration variable here and after when there is an integration to lighten the notation.
Using the conditions \eqref{D-b} and \eqref{D-v}, we have
\begin{align*}
\ee|\eta^{v_1}(t\wedge\tau_n^1)|^{2p_1}\le\ee|v_1|^{2p_1} +C  \ee\int_{0}^{t\wedge\tau_n^1}|\eta^{v_1}|^{2p_1} \dd s.
\end{align*}
Using  Gronwall inequality and Fatou lemma, we obtain \eqref{DX-est}.

 Similarly, $\xi^{v_1,v_2}(t)$ satisfies $\xi^{v_1,v_2}(0)=0$ and 
\begin{align*}
\dd \xi^{v_1,v_2}(t)
&=[Db(X_t)\xi^{v_1,v_2}(t)+D^2b(X_t)(\eta^{v_1}(t),\eta^{v_2}(t))]\dd t \\
&\quad+\sum_{j=1}^m[D\sigma_j(X_t)\xi^{v_1,v_2}(t)+D^2\sigma_j(X_t)(\eta^{v_1}(t),\eta^{v_2}(t))]\dd W_j(t) \\
&\quad+\int_Z [D\gamma(X_t,z)\xi^{v_1,v_2}(t) 
+ D^2\gamma(X_t,z)(\eta^{v_1}(t),\eta^{v_2}(t))] \widetilde{N}(\dd t,\dd z).
\end{align*} 
Following the same idea as used to derive \eqref{1var-Ito} (and introducing an analogous stopping time if necessary), we have 
\begin{align} \label{2var-Ito}
& \ee|\xi^{v_1,v_2}(t)|^{2p_2} \nonumber  \\
&  \le 2p_2 \ee \int_{0}^{t}|\xi^{v_1,v_2}|^{2(p_2-1)}
\<\xi^{v_1,v_2},D^2b(X)(\eta^{v_1},\eta^{v_2})\>\dd s \nonumber \\
&\quad+p_2\ee\int_{0}^{t}|\xi^{v_1,v_2}|^{2(p_2-1)}[
2\<\xi^{v_1,v_2},Db(X)\xi^{v_1,v_2}\>
+(2p_2-1)(1+\epsilon) \|D\sigma(X)\xi^{v_1,v_2} \|^2]\dd s  \nonumber \\
&\quad+C\ee\int_{0}^{t}\int_Z|D\gamma(X,z)\xi^{v_1,v_2}|^{2p_2}\nu(\dd z)\dd s+C\ee\int_{0}^{t}|\xi^{v_1,v_2}|^{2p_2}\dd s\nonumber \\
&\quad + C \ee \int_{0}^{t} \|D^2\sigma(X)(\eta^{v_1},\eta^{v_2})\|^{2p_2}\dd s
+C\ee\int_{0}^{t}\int_Z |D^2\gamma(X,z)(\eta^{v_1},\eta^{v_2})|^{2p_2} \nu(\dd z)\dd s \nonumber\\
&\le C\ee\int_{0}^{t}|\xi^{v_1,v_2}|^{2p_2}\dd s + C\ee\int_{0}^{t}
|D^2b(X)(\eta^{v_1},\eta^{v_2})|^{2p_2}\dd s \nonumber \\
&\quad+C \ee\int_{0}^{t} \|D^2\sigma(X)(\eta^{v_1},\eta^{v_2})\|^{2p_2}\dd s 
+C\ee\int_{0}^{t}
 \int_Z
|D^2\gamma(X,z)(\eta^{v_1},\eta^{v_2})|^{2p_2}\nu(\dd z)\dd s \nonumber \\
&=:C \ee\int_{0}^{t}|\xi^{v_1,v_2}|^{2p_2}\dd s+I_{11}+I_{12}+I_{13}.
\end{align} 
For the term $I_{11}$, by using \eqref{D3-v} and \eqref{DX-est},  we can show that
\begin{align*}
I_{11} 
&\le C \ee\int_{0}^{t} (1+|X|)^{2p_2(q-2)}|\eta^{v_1}|^{2p_2}|\eta^{v_2}|^{2p_2}\dd s \\
&\le Ce^{Ct}\sup_{s\in[0,T]}|\HH_{q-2}(X_s)|^{2p_2}_{L_\omega^{2\rho_1p_2}}|v_1|^{2p_2}_{L_\omega^{2\rho_2p_2}}|v_2|^{2p_2}_{L_\omega^{2\rho_3p_2}},
\end{align*}
for some positive constants $\rho_i$ satisfying $\sum_{i=1}^31/\rho_i=1$.
For the terms $I_{12}$ and $I_{13}$, by using Assumption \ref{A4} and \eqref{DX-est},  we obtain
\begin{align*}
I_{12} +I_{13}\le e^{Ct}\sup_{s\in[0,T]}|\HH_{q-2}(X_s)|^{2p_2}_{L_\omega^{2\rho_1p_2}}|v_1|^{2p_2}_{L_\omega^{2\rho_2p_2}}|v_2|^{2p_2}_{L_\omega^{2\rho_3p_2}}. 
\end{align*} 
We derive \eqref{D2X-est} by the above two estimates, \eqref{2var-Ito}, and Gronwall inequality.

For the last term $\zeta^{v_1,v_2,v_3}(t)$, we get $\zeta^{v_1,v_2,v_3}(0,x)=0$ and 
\begin{align*}
\dd \zeta^{v_1,v_2,v_3}(t) 
&=[Db(X_t)\zeta^{v_1,v_2,v_3}(t)+D^2b(X_t)(\xi^{v_2,v_3}(t),\eta^{v_1}(t))\\
&\qquad+D^2b(X_t)(\xi^{v_1,v_3}(t),\eta^{v_2}(t))+D^2(X_t)(\xi^{v_1,v_2}(t),\eta^{v_3}(t))\\
&\qquad + D^3b(X_t)(\eta^{v_1}(t),\eta^{v_2}(t),\eta^{v_3}(t))] \dd t \\
&\quad + \sum_{j=1}^m [D\sigma_j(X_t)\zeta^{v_1,v_2,v_3}(t)+D^2\sigma_j(X_t)(\xi^{v_2,v_3}(t),\eta^{v_1}(t)) \\
&\qquad+D^2\sigma_j(X_t)(\xi^{v_1,v_3}(t),\eta^{v_2}(t))
+D^2\sigma_j(X_t)(\xi^{v_1,v_2}(t),\eta^{v_3}(t))\\
&\qquad+D^3\sigma_j(X_t)(\eta^{v_1}(t),\eta^{v_2}(t),\eta^{v_3}(t))] \dd W_j(t) \\
&\quad + \int_Z [D^3\gamma(X_t,z) \zeta^{v_1,v_2,v_3}(t)+D^2\gamma(X_t,z)(\xi^{v_2,v_3}(t),\eta^{v_1}(t)) \\
&\qquad +D^2\gamma(X_t,z)(\xi^{v_1,v_3}(t),\eta^{v_2}(t))
 +D^2\gamma(X_t,z)(\xi^{v_1,v_2}(t),\eta^{v_3}(t))\\
&\qquad+D^3\gamma(X_t,z)(\eta^{v_1}(t),\eta^{v_2}(t),\eta^{v_3}(t)))]\widetilde{N}(\dd t,\dd z).
\end{align*} 
Then
\begin{align} \label{3var-Ito}
& \ee|\zeta^{v_1,v_2,v_3}(t)|^2  \nonumber \\
& \le C \ee\int_{0}^{t} |\zeta^{v_1,v_2,v_3}|^2 \dd s 
+ C \ee\int_{0}^{t} [|D^2b(X)(\xi^{v_2,v_3},\eta^{v_1})|^2
+|D^2b(X)(\xi^{v_1,v_3},\eta^{v_2})|^2  \nonumber \\
&\qquad +  |D^2b(X)(\xi^{v_1,v_2},\eta^{v_3})|^2 
+|D^3b(X)(\eta^{v_1},\eta^{v_2},\eta^{v_3})|^2] \dd s \nonumber \\
&\quad + C \ee\int_{0}^{t} [\|D^2\sigma(X)(\xi^{v_2,v_3},\eta^{v_1})\|^2 + 
\|D^2\sigma(X)(\xi^{v_1,v_3},\eta^{v_2})\|^2 \nonumber \\
&\qquad + \|D^2\sigma(X)(\xi^{v_1,v_2},\eta^{v_3})\|^2
+ \|D^3\sigma(X)(\eta^{v_1},\eta^{v_2},\eta^{v_3})\|^2] \dd s
\nonumber \\
&\quad+C \ee\int_{0}^{t}\int_Z [|D^2\gamma(X,z)(\xi^{v_2,v_3},\eta^{v_1})|^2
+ |D^2\gamma(X,z)(\xi^{v_1,v_3},\eta^{v_2})|^2  \nonumber \\
&\quad + |D^2\gamma(X,z)(\xi^{v_1,v_2},\eta^{v_3})|^2
+ |D^3\gamma(X,z)(\eta^{v_1},\eta^{v_2},\eta^{v_3})|^2] \nu(\dd z)\dd s. \nonumber \\
&=:C \ee\int_{0}^{t}|\zeta^{v_1,v_2,v_3}|^2\dd s+I_{21}+I_{22}+I_{23}.
\end{align} 
We begin with the estimation of the term $I_{21}$ as follows:
\begin{align}
\label{I21}
I_{21} &\le C\ee\int_{0}^{t} (1+|X|)^{2q-4}|\xi^{v_2,v_3}|^2|\eta^{v_1}|^2 \dd s
+C\ee\int_{0}^{t} (1+|X|)^{2q-4}|\xi^{v_1,v_3}|^2|\eta^{v_2}|^2 \dd s \nonumber \\
&\quad+C\ee\int_{0}^{t} (1+|X|)^{2q-4}|\xi^{v_1,v_2}|^2|\eta^{v_3}|^2 \dd s 
+C\ee\int_{0}^{t} (1+|X|)^{2q-6}|\eta^{v_1}|^2|\eta^{v_2}|^2|\eta^{v_3}|^2
\dd s \nonumber \\
&=:I_{21}^1+I_{21}^2+I_{21}^3+I_{21}^4,
\end{align}
where we used the condition \eqref{D3-v}.
We note that the analysis of $I_{21}^1,I_{21}^2,I_{21}^3$ are the same, and we take $I_{21}^1$ and $I_{21}^2$ as examples.
By   \eqref{DX-est},  \eqref{D2X-est}, and the H\"older inequality with $\rho_j$ satisfying
$\sum_{j=1}^31/ \rho_j=\sum_{j=4}^6 1/\rho_j=1$, we have 
\begin{align*}
I_{21}^1 &\le C\int_0^t|\HH_{q-2}(X)|^2_{L_\omega^{2\rho_1}}|\eta^{v_1}|^2_{L_\omega^{2\rho_2}}|\xi^{v_2,v_3}|^2_{L_\omega^{2\rho_3}}\dd s \\
&\le Ce^{Ct}\sup_{s\in[0,T]}|\HH_{q-2}(X_s)|^2_{L_\omega^{\max\{2\rho_1 ,2\rho_3 \rho_4\}}}|v_1|^2_{L_\omega^{2\rho_2}}
 |v_2|^2_{L_\omega^{2\rho_3\rho_5}}|v_3|^2_{L_\omega^{2\rho_3\rho_6}}.
\end{align*}
For  $I_{21}^2$, we choose others $\chi_j>0$ with
$\sum_{j=1}^3 1/\chi_j=\sum_{j=4}^6 1/\chi_j=1$ to show
\begin{align*}
I_{21}^2 &\le C\int_0^t|\HH_{q-2}(X)|^2_{L_\omega^{2\chi_1}}|\eta^{v_2}|^2_{L_\omega^{2\chi_2}}|\xi^{v_1,v_3}|^2_{L_\omega^{2\chi_3}}\dd s \\
&\le Ce^{Ct}\sup_{s\in[0,T]}|\HH_{q-2}(X_s)|^2_{L_\omega^{\max\{2\chi_1,2\chi_3\chi_4\}}}|v_1|^2_{L_\omega^{2\chi_3\chi_5}} 
|v_2|^2_{L_\omega^{2\chi_2}}|v_3|^2_{L_\omega^{2\chi_3\chi_6}}.
\end{align*}
Then we take $\chi_1=\rho_1, \chi_3\chi_4=\rho_3\rho_4,\chi_3\chi_5=\rho_2,\chi_2=\rho_3\rho_5,\chi_3\chi_6=\rho_3\rho_6$. It is obvious that
$\tfrac{1}{\rho_3}=\tfrac{1}{\chi_3\chi_4}+\tfrac{1}{\chi_2}+\tfrac{1}{\chi_3\chi_6}=1-\tfrac{1}{\rho_1}-\tfrac{1}{\rho_2}=1-\tfrac{1}{\chi_1}-\tfrac{1}{\chi_3\chi_5},$
which also leads to
$\tfrac{1}{\chi_1}+\tfrac{1}{\chi_2}+\tfrac{1}{\chi_3\chi_4}+\tfrac{1}{\chi_3\chi_5}+\tfrac{1}{\chi_3\chi_6}=\tfrac{1}{\chi_1}+\tfrac{1}{\chi_2}+\tfrac{1}{\chi_3}(\tfrac{1}{\chi_4}+\tfrac{1}{\chi_5}+\tfrac{1}{\chi_6})=1.$
This implies a one-to-one correspondence between $\rho_i$ and $\chi_i$ for $i\in \{1,2,3,4,5,6\}$. 

For $I_{21}^4$, we apply  \eqref{DX-est}   and the H\"older inequality with positive constants  $\rho'_j$ satisfying $\sum_{j=1}^41/\rho'_j=1$ to obtain
\begin{align*}
I_{21}^4 &\le  Ce^{Ct}\sup_{s\in[0,T]}|\HH_{q-3}(X_s)|^2_{L_\omega^{2\rho'_1}}|v_1|^2_{L_\omega^{2\rho'_2}}|v_2|^2_{L_\omega^{2\rho'_3}}|v_3|^2_{L_\omega^{2\rho'_4}}. 
\end{align*}
Assuming, for example, $\rho'_1=\rho_1$ and $\rho'_2=\rho_2$, we can choose
$\rho'_3$ and $\rho'_4$ such that $\rho_3\rho_5\ge\rho'_3$ and $\rho_3\rho_6\ge\rho'_4$. That is, $I_{21}^4$ can be  controlled by $I_{21}^1$. Hence, we obtain
\begin{align*}
I_{21}&\le Ce^{Ct}\sup_{s\in[0,T]}|\HH_{q-2}(X_s)|^2_{L_\omega^{\max\{2\rho_1 ,2\rho_3 \rho_4\}}}|v_1|^2_{L_\omega^{2\rho_2}} |v_2|^2_{L_\omega^{2\rho_3\rho_5}}|v_3|^2_{L_\omega^{2\rho_3\rho_6}}.
\end{align*}
Similarly, it is not hard to get
\begin{align*}
 I_{22}+I_{23}&\le Ce^{Ct}\sup_{s\in[0,T]}|\HH_{q-2}(X_s)|^2_{L_\omega^{\max\{2\rho_1 ,2\rho_3 \rho_4\}}}|v_1|^2_{L_\omega^{2\rho_2}} |v_2|^2_{L_\omega^{2\rho_3\rho_5}}|v_3|^2_{L_\omega^{2\rho_3\rho_6}}.
\end{align*}
Then \eqref{3var-Ito} can be bounded as
\begin{align*}
& \ee|\zeta^{v_1,v_2,v_3}(t)|^2 \\
&\le C \ee\int_{0}^{t}|\zeta^{v_1,v_2,v_3}|^2\dd s 
+ Ce^{Ct}\sup_{s\in[0,T]}|\HH_{q-2}(X_s)|^2_{L_\omega^{\max\{2\rho_1 ,2\rho_3 \rho_4\}}}
 |v_1|^2_{L_\omega^{2\rho_2}}|v_2|^2_{L_\omega^{2\rho_3\rho_5}}|v_3|^2_{L_\omega^{2\rho_3\rho_6}}.
\end{align*}
By Gronwall inequality, we obtain \eqref{D3X-est}. 

In the case $\gamma=0$, by the condition \eqref{D-b+}, the first process $\eta^{v_1}(t)$ satisfies 
\begin{align*}
& \ee e^{p_1L_9 t}|\eta^{v_1}(t)|^{2p_1}  \\
&\le \ee|v_1|^{2p_1} + p_1L_9\ee\int_{0}^{t}e^{p_1\alpha s}|\eta^{v_1}|^{2p_1} \dd s \\
&\quad+p_1\ee\int_{0}^{t}e^{p_1\alpha s}
|\eta^{v_1}|^{2 p_1-2}[2\<\eta^{v_1},Db(X)\eta^{v_1}\>
+ (2p_1-1) \|D\sigma(X)\eta^{v_1}\|^2]\dd s \\
&\le\ee|v_1|^{2p_1},
\end{align*}
which shows \eqref{DX-est+}.
The second process $\xi^{v_1,v_2}(t)$ satisfies
\begin{align} \label{2var-Ito+}
& \ee |\xi^{v_1,v_2}(t)|^{2p_2}  \nonumber \\
&\le 2p_2\ee\int_{0}^{t}|\xi^{v_1,v_2}|^{2(p_2-1)}
\<\xi^{v_1,v_2},D^2b(X)(\eta^{v_1},\eta^{v_2})\>\dd s \nonumber \\
&\quad+p_2\ee\int_{0}^{t}|\xi^{v_1,v_2}|^{2(p_2-1)}
[ 2\<\xi^{v_1,v_2},Db(X)\xi^{v_1,v_2}\>
+(2p_2-1)(1+\epsilon_1) \|D\sigma(X) \xi^{v_1,v_2} \|^2]\dd s  \nonumber \\
&\quad+C_{\epsilon_1} \ee\int_{0}^{t} |\xi^{v_1,v_2}|^{2(p_2-1)} \|D^2\sigma(X)(\eta^{v_1},\eta^{v_2})\|^{2}\dd s
\nonumber\\
&\le -(p_2L_9-\epsilon_2)\ee\int_{0}^{t}|\xi^{v_1,v_2}|^{2p_2}\dd s \nonumber + C_{\epsilon_2}\ee\int_{0}^{t}
|D^2b(X)(\eta^{v_1},\eta^{v_2})|^{2p_2}\dd s \nonumber \\
&\quad+C_{\epsilon_1,\epsilon_2}\ee\int_{0}^{t} \|D^2\sigma(X)(\eta^{v_1},\eta^{v_2})\|^{2p_2}\dd s \nonumber \\
&=:-(p_2L_9-\epsilon_2) \ee\int_{0}^{t}|\xi^{v_1,v_2}|^{2p_2}\dd s+I_{11}+I_{12},
\end{align}
for a positive constant $\epsilon_2 \in (0,p_2L_9)$.  
For $I_{11}$, by using \eqref{D3-v} and \eqref{DX-est},  we have 
\begin{align*}
 I_{11}  
&\le C \Big(\int_{0}^{t}e^{-4p_2\alpha s}\dd s\Big) \sup_{s\ge0}|\HH_{q-2}(X_s)|^{2p_2}_{L_\omega^{2\rho_1p_2}}  |v_1|^{2p_2}_{L_\omega^{2\rho_2p_2}} |v_2|^{2p_2}_{L_\omega^{2\rho_3p_2}},
\end{align*}
for some positive constants $\rho_j$ satisfying $\sum_{j=1}^31/\rho_j=1$.
For the term $I_{12}$, by using Assumption \ref{A4} and \eqref{DX-est+},  we obtain
\begin{align*}
I_{12} 
&\le C \Big(\int_{0}^{t}e^{-4p_2\alpha s}\dd s\Big) \sup_{s\ge 0} |\HH_{q-2}(X_s)|^{2p_2}_{L_\omega^{2\rho_1p_2}}  |v_1|^{2p_2}_{L_\omega^{2\rho_2p_2}}|v_2|^{2p_2}_{L_\omega^{2\rho_3p_2}}. 
\end{align*}
Combining the above two estimates and \eqref{2var-Ito+},  we have
\begin{align*}
& \ee|\xi^{v_1,v_2}(t)|^{2p_2}+ (p_2L_9-\epsilon_2) \ee\int_{0}^{t}|\xi^{v_1,v_2}|^{2p_2}\dd s \\
&\le  C \Big(\int_{0}^{t}e^{-4p_2\alpha s}\dd s\Big) \sup_{s\ge 0} |\HH_{q-2}(X_s)|^{2p_2}_{L_\omega^{2\rho_1p_2}}|v_1|^{2p_2}_{L_\omega^{2\rho_2p_2}}|v_2|^{2p_2}_{L_\omega^{2\rho_3p_2}}. 
\end{align*}
By  Lemma \ref{lm-cite}, we derive \eqref{D2X-est+}. 

Similarly, the last process $\zeta^{v_1,v_2,v_3}(t)$ satisfies 
\begin{align} \label{3var-Ito+}
\ee|\zeta^{v_1,v_2,v_3}(t)|^2 
&\le -(\alpha- \epsilon_4) \ee\int_{0}^{t}|\zeta^{v_1,v_2,v_3}|^2\dd s \nonumber \\ 
&\quad + C_{\epsilon_4} \ee\int_{0}^{t}(|D^2b(X)(\xi^{v_2,v_3},\eta^{v_1})|^2
+|D^2b(X)(\xi^{v_1,v_3},\eta^{v_2})|^2\nonumber \\
&\qquad + |D^2b(X)(\xi^{v_1,v_2},\eta^{v_3})|^2 
+ |D^3b(X)(\eta^{v_1},\eta^{v_2},\eta^{v_3})|^2
)\dd s \nonumber \\
&\quad+C_{\epsilon_3,\epsilon_4} \ee \int_{0}^{t}(\|D^2\sigma(X)(\xi^{v_2,v_3},\eta^{v_1})\|^2 
+ \|D^2\sigma(X)(\xi^{v_1,v_3},\eta^{v_2})\|^2
\nonumber \\
&\qquad+ \|D^2\sigma(X)(\xi^{v_1,v_2},\eta^{v_3})\|^2
+\|D^3\sigma(X)(\eta^{v_1},\eta^{v_2},\eta^{v_3})\|^2)\dd s
\nonumber \\
&=: -(L_9- \epsilon_4)\ee\int_{0}^{t}|\zeta^{v_1,v_2,v_3}|^2\dd s+I_{21}+I_{22},
\end{align}
for positive constants $\epsilon_3\in(0,2p-1)$ and $\epsilon_4\in(0,L_9)$. 
For  \eqref{I21}, it is not hard to  show that the analysis of $I_{21}^1, I_{21}^2, I_{21}^3$ are equivalent. We choose some positive constants $\rho_j$ satisfying $\sum_{j=1}^31/\rho_j=\sum_{j=4}^61/\rho_j =1$ to get
\begin{align*}
I_{21}^1 
&\le C\int_{0}^{t}|\HH_{q-2}(X)|^2_{L_\omega^{2\rho_1}}|\eta^{v_1}|^2_{L_\omega^{2\rho_2}} |\xi^{v_2,v_3}|^2_{L_\omega^{2\rho_3}} \dd s\\
&\le C \Big(\int_{0}^{t} e^{-4\alpha s} \dd s\Big)\sup_{s\ge 0} |\HH_{q-2}(X_s)|^2_{L_\omega^{\max\{2\rho_1, 2\rho_3\rho_4\}}}
|v_1|^2_{L_\omega^{2\rho_2}} |v_2|^2_{L_\omega^{2\rho_3\rho_5}}|v_3|^2_{L_\omega^{2\rho_3\rho_6}}.
\end{align*}
For $I_{21}^4$, we apply   \eqref{DX-est},   and the H\"older inequality with positive constants $\rho'_j$ satisfying $\sum_{j=1}^41/\rho'_j=1$ to obtain
\begin{align*}
I_{21}^4 &\le  C\Big(\int_{0}^{t} e^{-6\alpha s} \dd s\Big) \sup_{s\ge 0} |\HH_{q-3}(X_s)|^2_{L_\omega^{2\rho'_1}}|v_1|^2_{L_\omega^{2\rho'_2}}|v_2|^2_{L_\omega^{2\rho'_3}}|v_3|^2_{L_\omega^{2\rho'_4}}. 
\end{align*}
Assuming, for example, $\rho'_1=\rho_1$ and $\rho'_2=\rho_2$, we can choose $\rho'_3$ and $\rho'_4$ such that $\rho_3\rho_5\ge\rho'_3$ and $\rho_3\rho_6\ge\rho'_4$ . Hence, we have
\begin{align*}
I_{21} &\le C \Big(\int_{0}^{t} e^{-4\alpha s} \dd s\Big) \sup_{s\ge 0} |\HH_{q-2}(X_s)|^2_{L_\omega^{\max\{2\rho_1, 2\rho_3\rho_4\}}}  |v_1|^2_{L_\omega^{2\rho_2}}|v_2|^2_{L_\omega^{2\rho_3\rho_5}}|v_3|^2_{L_\omega^{2\rho_3\rho_6}}.
\end{align*}
Similarly, it is not hard to get
\begin{align*}
 I_{22} &\le C \Big(\int_{0}^{t} e^{-4\alpha s} \dd s\Big) \sup_{s\ge 0} |\HH_{q-2}(X_s)|^2_{L_\omega^{\max\{2\rho_1, 2\rho_3\rho_4\}}} |v_1|^2_{L_\omega^{2\rho_2}}|v_2|^2_{L_\omega^{2\rho_3\rho_5}}|v_3|^2_{L_\omega^{2\rho_3\rho_6}}.
\end{align*}
Plugging these estimates with \eqref{3var-Ito+}, we have
\begin{align*}
&\ee|\zeta^{v_1,v_2,v_3}(t)|^2 +(L_9- \epsilon_4)\ee\int_{0}^{t}|\zeta^{v_1,v_2,v_3}|^2\dd s\\
&\le C \Big(\int_{0}^{t} e^{- 4\alpha s} \dd s\Big) \sup_{s\ge 0} |\HH_{q-2}(X_s)|^2_{L_\omega^{\max\{2\rho_1, 2\rho_3\rho_4\}}}  |v_1|^2_{L_\omega^{2\rho_2}}|v_2|^2_{L_\omega^{2\rho_3\rho_5}}|v_3|^2_{L_\omega^{2\rho_3\rho_6}}.
\end{align*}
By  Lemma \ref{lm-cite}, we obtain \eqref{D3X-est+}.
\end{proof}

\vspace{0.5em}

As a byproduct of Lemma \ref{lm-DX} and the chain rule, we have the following estimate of the derivatives of the solution to the Kolmogorov equation \eqref{kol}.

\vspace{0.5em}

\begin{cor} \label{cor-Du} 
Let $\varphi\in C_b^3(\rr^d)$, Assumptions \ref{A3} and \ref{A4} hold, $T>0$, and $\rho_j > 0$ for $j \in \{1, \cdots, 6\}$ satisfy $\sum_{j=1}^3 1/ \rho_j=\sum_{j=4}^61/ \rho_j=1$.
 Then for any $t \in [0, T]$, $u(t, \cdot)\in C_b^3 (\rr^d)$ a.s. and there exists a constant $C > 0$ such that for any $v_1 \in L_\omega^{2\rho_2 }$, $v_2 \in L_\omega^{2 \rho_3 \rho_5}$, and 
$v_3 \in L_\omega^{2 \rho_3 \rho_6}$, 
    \begin{align}
     |D u(t, \cdot) v_1|_{L_\omega^1} &\le
       e^{C t}|v_1|_{L_\omega^2},     \label{Du-est}\\
       |D^2 u(t, \cdot)(v_1, v_2)|_{L_\omega^1} & \le e^{C t} \sup_{s\in[0,T]}|\HH_{q-2}(X_s^\cdot)|_{L_\omega^{2\rho_1}}|v_1|_{L_\omega^{2\rho_2}}|v_2|_{L_\omega^{2\rho_3}} \label{D2u-est},  \\
    \label{D3u-est}
 |D^3u(t, \cdot)(v_1,v_2,v_3)|_{L_\omega^1} 
 &\le e^{C  t} \sup_{s\in[0,T]}|\HH_{q-2}(X_s^\cdot)|_{L_\omega^{\max\{2\rho_1,2\rho_3\rho_4\}}}|v_1|_{L_\omega^{2\rho_2}}|v_2|_{L_\omega^{2\rho_3\rho_5}}|v_3|_{L_\omega^{2\rho_3\rho_6}}.
 \end{align} 
Moreover, if $\gamma=0$ and Assumption \ref{A3*} holds with $2L_4^*>(2p-1)L_5$ for some $p \in \nn_+$,  there exist constants $C, \alpha>0$ that  
    \begin{align}
    |D u(t, \cdot) v_1|_{L_\omega^1} &\le 
      C e^{-\alpha t} |v_1|_{L_\omega^2},\label{Du-est+} \\ 
  |D^2 u(t, \cdot)(v_1, v_2)|_{L_\omega^1} 
     &\le C e^{-\alpha t} \sup_{s\ge 0}|\HH_{q-2}(X_s^\cdot)|_{L_\omega^{2\rho_1}}|v_1|_{L_\omega^{2\rho_2}} |v_2|_{L_\omega^{2\rho_3}},         \label{D2u-est+} \\
    \label{D3u-est+}
  |D^3u(t, \cdot)(v_1,v_2,v_3)|_{L_\omega^1}
&\le C e^{-\alpha t} \sup_{s\ge 0}|\HH_{q-2}(X_s^\cdot)|_{L_\omega^{\max\{2\rho_1,2\rho_3\rho_4\}}}|v_1|_{L_\omega^{2\rho_2}}|v_2|_{L_\omega^{2\rho_3\rho_5}}|v_3|_{L_\omega^{2\rho_3\rho_6}}.
 \end{align}   
  \end{cor}

Before proceeding further, note that the BEM  \eqref{bem} is not necessarily continuous throughout the entire time interval.
 To overcome the potential implicitness and discrete-time nature  of the BEM  \eqref{bem}, similar to \cite{PWW24}, 
we introduce its continuous-time interpolation $\{\yy(t)\}_{t\in [t_n,t_{n+1}]}$ and $n \in \{0,1,\cdots,N-1\}$:
\begin{align}
\yy(t)&=\yy(t_n)+b(Y_n)(t-t_n)+\sigma(Y_n)(W(t)-W(t_n))+\int_{t_n}^t\int_Z\gamma(Y_n,z)\widetilde{N}(\dd s, \dd z), \nonumber \\
\yy(t_n)&=Y_n- b(Y_n)\tau.   \label{yy} 
\end{align}
It is straightforward to verify that $\yy(t_{n+1})=Y_{n+1}- b(Y_{n+1})\tau$, and for any $s,t\in [t_n,t_{n+1}]$,  the left limit $\lim\limits_{s \rightarrow t^{-}} \yy(s)$ exists and $\yy(t)=\lim\limits_{s \rightarrow t^{+}} \yy(s)$ holds.  Thus $\yy$ is a c\`adl\`ag process.  

We have the following estimates of the continuous interpolation $\yy$.

\vspace{0.5em}

\begin{lem} \label{lm-yy} 
Assume $Y_n \in L^{2qq_2}(\Omega)$ with $q_2\in \nn_+$ and let Assumption \ref{A3} hold.
For any $t_n \le s \le t \le t_{n+1}$ with $n \in \nn$, there exists a constant $C > 0$ such that
\begin{align*}
|\yy(t)|_{L_\omega^{2q_2}} & \le C|Y_n|^{q}_{L_\omega^{2qq_2}},\quad
|\yy(t)-\yy(s)|_{L_\omega^{2q_2}} \le C(t-s)^{1/(2q_2)}|Y_n|^{q}_{L_\omega^{2qq_2}} .
\end{align*}
If $\gamma=0$ and Assumption \ref{A3*} holds with $2L_4^*>(2p-1)L_5$ for some $p \in \nn_+$, then
\begin{align*}
|\yy(t)|_{L_\omega^{2q_2}} & \le C|Y_n|^{q}_{L_\omega^{2qq_2}},\quad 
|\yy(t)-\yy(s)|_{L_\omega^{2q_2}} \le C(t-s)^{1/2}|Y_n|^{q}_{L_\omega^{2qq_2}} .
\end{align*} 
\end{lem}

\begin{proof}
For the  jump-diffusion SODE \eqref{sde}, by \eqref{yy},  triangle inequality, and the condition \eqref{D3-v}, for $n\in \nn$, $q_2 \in \nn_+$,  and $t,s  \in [t_n,t_{n+1}]$,  we have
\begin{align*}
|\yy(t)|_{L_\omega^{2q_2}} &\le |\yy(t_n)|_{L_\omega^{2q_2}}+(t-t_n)|b(Y_n)|_{L_\omega^{2q_2}}  + (t-t_n)^{\frac{1}{2}}\|\sigma(Y_n)\|_{L_\omega^{2q_2}} \\
&\quad+(t-t_n)^{\frac{1}{2q_2}}\int_Z|\gamma(Y_n,z)|_{L_\omega^{2q_2}}\nu(\dd z) \le C|Y_n|^q_{L_\omega^{2qq_2}},
\end{align*}
and
\begin{align*}
|\yy(t)-\yy(s)|_{L_\omega^{2q_2}} &\le (t-s)|b(Y_n)|_{L_\omega^{2q_2}}+ (t-s)^{\frac{1}{2}}\|\sigma(Y_n)\|_{L_\omega^{2q_2}} \\
&\quad+(t-s)^{\frac{1}{2q_2}}\int_Z|\gamma(Y_n,z)|_{L_\omega^{2q_2}}\nu(\dd z)  \le C(t-s)^{\frac{1}{2q_2}}|Y_n|^q_{L_\omega^{2qq_2}} .
\end{align*}
This shows the first part.
Another part can be shown analogously for the jump-free SODEs \eqref{sde}.
\end{proof}

\section{Weak Convergence Analysis of BEM}
\label{sec4}

\vspace{0.5em}

In this section, we pursue two primary objectives.
Firstly, we establish a quantitative weak error estimate between the BEM \eqref{bem} and the jump-diffusion SODE \eqref{sde} in the finite time interval $[0, T]$, where $T$ is a given positive number.  
Secondly, we derive an infinite-time weak error analysis for the BEM  \eqref{bem} applied to the jump-free Eq. \eqref{sde}, providing an error estimate for corresponding continuous and discrete invariant measures.

In what follows, we will use the deterministic Taylor formula frequently for a differentiable function $f:\rr^d\rightarrow\rr^d$:
$f(x)-f(y)=Df(y)(x-y)+\RR_f(x,y),$
where
\begin{align*}
\RR_f(x,y):=\int_0^1(Df(y+r(x-y))-Df(y))(x-y)\dd r, \quad x,y\in \rr^d.
\end{align*}
To streamline subsequent analysis,  we define the following terms:
\begin{align*}
J_1^n&:=\ee \int_{t_n}^{t_{n+1}} Du(T-s,\yy)(b(Y_n)-b(\yy)) \dd s, \\
J_2^n&:=\frac{1}{2}\ee \int_{t_n}^{t_{n+1}} D^2u(T-s,\yy)
(\sigma_j(Y_n),\sigma_j(Y_n))
-D^2u(T-s,\yy)(\sigma_j(\yy),\sigma_j(\yy))\dd s,  \\
J_3^n&:=\ee \int_{t_n}^{t_{n+1}} \int_Z u(T-s,\yy+\gamma(Y_n,z))-u(T-s,\yy+\gamma(\yy,z))\nu(\dd z)\dd s,\\
J_4^n&:=\ee \int_{t_n}^{t_{n+1}} \int_Z Du(T-s,\yy)(\gamma(\yy,z)-\gamma(Y_n,z))\nu(\dd z)\dd s. 
\end{align*}
Next, we successively estimate these four terms.

Let us begin with the estimates of the first two terms above.

\vspace{0.5em}

\begin{lem} \label{lm-J1}
Assume $X_0\in L^{(4q-2)q}(\Omega)$ and let Assumptions \ref{A3} and \ref{A4} hold.   
There exists a constant $C > 0$ such that
\begin{align}
\label{J1}
\sum_{n=0}^{N-1}|J_1^n|  \le  e^{C T} (1+|X_0|^{(2q-1)q}_{L_\omega^{(4q-2)q}})\tau^{\frac{1}{2q-1}}.
\end{align}
Moreover, if $\gamma=0$ and Assumption \ref{A3*} holds with $2L_4^*>(2p-1)L_5$ and  $L_8>L_6\cdot C_p$ for some $p \in \nn_+$, where $C_p$ is the constant in Lemma \ref{lm-Y}, then
\begin{align}
\label{J1+}
\sum_{n=0}^{N-1}|J_1^n|  \le C (1+|X_0|^{(2q-1)q}_{L_\omega^{(4q-2)q}})\tau.
\end{align} 
\end{lem}

\begin{proof}
A further decomposition is introduced for $J_1^n$:
\begin{align*}
J_1^n&=\ee \int_{t_n}^{t_{n+1}} Du(T-s,\yy)(b(Y_n)-b(\yy_n)) \dd s \\
&\quad+\ee \int_{t_n}^{t_{n+1}} Du(T-s,\yy_n)(b(\yy_n)-b(\yy)) \dd s \\
&\quad+\ee \int_{t_n}^{t_{n+1}} (Du(T-s,\yy)-Du(T-s,\yy_n))(b(\yy_n)-b(\yy)) \dd s=:J_{11}^n+J_{12}^n+J_{13}^n.
\end{align*}
For the term $J_{11}^n$, by  Corollary \ref{cor-Du}, \eqref{D3-v}, Lemma \ref{lm-yy}, and  H\"older inequality, we have
\begin{align}
|J_{11}^n|
 &\le  \int_{t_n}^{t_{n+1}} e^{C (T-s)}|b(Y_n)-b(\yy_n)|_{L_\omega^2} \dd s \nonumber \\
&\le C \Big( \int_{t_n}^{t_{n+1}} e^{C (T-s)} \dd s\Big)
(1+\sup_{0\le n \le N}|Y_n|^{q^2}_{L_\omega^{2q^2}})\tau. \label{J11}
\end{align}
For $J_{12}^n$, the Taylor expansion and a conditional expectation argument give
\begin{align*} 
-J_{12}^n=\ee  \int_{t_n}^{t_{n+1}} \<Du(T-s,\yy_n),Db(\yy_n)b(Y_n)(s-t_n)+\RR_b(\yy,\yy_n)\> \dd s.
\end{align*} 
Utilizing \eqref{D3-v}, we get 
\begin{align*}
|Db(\yy_n)b(Y_n)|_{L_\omega^2} \le C|(1+|\yy_n|)^{q-1}(1+|Y_n|)^q|_{L_\omega^2}.
\end{align*}
Then  using the H\"older inequality and Lemma \ref{lm-yy} shows
\begin{align*}
(s-t_n)|Db(\yy_n)b(Y_n)|_{L_\omega^2} 
&\le C\tau |1+ |\yy_n|^{q-1}|_{L_\omega^{2r_1}}|1+|Y_n|^{q}|_{L_\omega^{2r_2}} 
\le C(1+ \sup_{0\le n \le N}|Y_n|^{q^2}_{L_\omega^{2q^2}})\tau,
\end{align*}
where $r_1=q/(q-1)$ and $r_2=q$.
Similarly, we also obtain
 \begin{align*}
|\RR_b(\yy(s),\yy_n)|_{L_\omega^2} 
&\le C |\HH_{q-2}(\yy_n+r(\yy(s)-\yy_n),\yy_n)|\yy(s)-\yy_n|^2|
_{L_\omega^2}\\
& \le C |1+|\yy_n|+|\yy(s)||^{q-2}
_{L_\omega^{2q}}|\yy(s)-\yy_n|^2_{L_\omega^{2q}} \le C(1+ \sup_{0\le n \le N}|Y_n|^{q^2}_{L_\omega^{2q^2}})\tau^{\frac{1}{q}}.
\end{align*}
Thus, by Corollary \ref{cor-Du}, one sees that
\begin{align} \label{J12}
|J_{12}^n |\le C \Big(\int_{t_n}^{t_{n+1}} e^{C (T-s)} \dd s\Big)
 (1+\sup_{0\le n \le N}|Y_n|^{q^2}_{L_\omega^{2q^2}}
)\tau^{\frac{1}{q}}.
\end{align} 
For the term $J_{13}^n$, denoting $v(r):=\yy_n+r(\yy(s)-\yy_n)$, using the Taylor expansion to $u(t,\cdot)$ yields that
\begin{align} \label{J13}
J_{13}^n=\ee \int_{t_n}^{t_{n+1}} \int_0^1D^2u(T-s,v(r))(\yy-\yy_n,b(\yy_n)-b(\yy))\dd r \dd s.
\end{align}
Applying  Corollary \ref{cor-Du}, Lemma \ref{lm-yy}, \eqref{D3-v}, and  H\"older inequality, we have
\begin{align*}
|J_{13}^n|
& \le  \int_{t_n}^{t_{n+1}} e^{C (T-s)} \sup_{s\in[0,T]}|\HH_{q-2}(X_s^{v(r)})|_{L_\omega^{2\rho_1}}|\yy-\yy_n|_{L_\omega^{2\rho_2}} 
|b(\yy_n)-b(\yy)|_{L_\omega^{2\rho_3}} \dd s \\
& \le C \Big(\int_{t_n}^{t_{n+1}} e^{C (T-s)} \dd s \Big)
(1+\sup_{0\le n \le N}|Y_n|^{(2q-1)q}_{L_\omega^{(4q-2)q}})\tau^{\frac{1}{2q-1}},
\end{align*}
 where the parameters are chosen as $\rho_1=(2q-1)/(q-2)$, $\rho_2=2q-1$, and $\rho_3=(2q-1)/q$. Then we conclude \eqref{J1} by Lemma \ref{lm-Y} and the estimates \eqref{J11}, \eqref{J12}, and \eqref{J13}.

In the case $\gamma=0$, by Corollary \ref{cor-Du}, \eqref{D3-v},   Lemma \ref{lm-yy}, and  H\"older inequality, we have
\begin{align*}
|J_{11}^n| &\le   C\int_{t_n}^{t_{n+1}} e^{-\alpha (T-s)}|b(Y_n)-b(\yy_n)|_{L_\omega^2} \dd s 
\le C\Big(\int_{t_n}^{t_{n+1}} e^{-\alpha (T-s)} \dd s \Big)
(1+\sup_{n \ge 0}|Y_n|^{q^2}_{L_\omega^{2q^2}}
)\tau.
\end{align*}
For $J_{12}^n$, combining \eqref{J12}, Corollary \ref{cor-Du},  \eqref{D3-v}, 
  H\"older inequality,  and  Lemma \ref{lm-yy}, one sees that
\begin{align*}
|J_{12}^n |\le C\Big(\int_{t_n}^{t_{n+1}} e^{-\alpha (T-s)}\dd s\Big)
 (1+\sup_{n\ge0}|Y_n|^{q^2}_{L_\omega^{2q^2}}
)\tau.
\end{align*} 
For the term $J_{13}^n$, by \eqref{J13}, Corollary \ref{cor-Du}, \eqref{D3-v},  H\"older inequality,  and Lemma \ref{lm-yy},  we choose $\rho_1=(2q-1)/(q-2)$, $\rho_2=2q-1$ and $\rho_3=(2q-1)/q$ to get 
\begin{align*}
|J_{13}^n| & \le C \Big(\int_{t_n}^{t_{n+1}} e^{-\alpha (T-s)}\dd s\Big)
 (1+\sup_{n\ge 0}|Y_n|^{(2q-1)q}_{L_\omega^{(4q-2)q}})\tau.
\end{align*} 
Then we conclude \eqref{J1+} by Lemma \ref{lm-Y} and the above three estimates. 
\end{proof}

\vspace{0.5em}

\begin{lem}
\label{lm-J2}
Assume $X_0\in L^{2q(2q-2)+2}(\Omega)$ and let Assumptions \ref{A3} and \ref{A4} hold. 
There exists a constant $C > 0$ such that
\begin{align}
\label{J2}
\sum_{n=0}^{N-1}|J_2^n| \le  e^{CT} (1+|X_0|^{q(2q-2)+1}_{L_\omega^{2q(2q-2)+2}})\tau^{\frac{1}{q^2-q+1}}.
\end{align}
Moreover, if $\gamma=0$ and Assumption \ref{A3*} holds with $2L_4^*>(2p-1)L_5$ and  $L_8>L_6\cdot C_p$ for some $p \in \nn_+$, where $C_p$ is the constant in Lemma \ref{lm-Y}, then 
\begin{align}
\label{J2+}
\sum_{n=0}^{N-1}|J_2^n| \le C (1+|X_0|^{q(2q-2)+1}_{L_\omega^{2q(2q-2)+2}})\tau.
\end{align} 
\end{lem}

\begin{proof}
For the term $J_2^n$, we here applying the following equality, for any matrix $M\in\rr^{d\times d}$ and any $a,b\in \rr^d$,
\begin{align*}
a^{\tr}Ma-b^{\tr}Mb=(a-b)^{\tr}M(a-b)+(a-b)^{\tr}Mb+b^{\tr}M(a-b).
\end{align*}
Moreover, one gives a further decomposition of $J_2^n$ as follows,
\begin{align*}
-J_2^n&=
\frac{1}{2}\sum_{j=1}^m \ee \int_{t_n}^{t_{n+1}} D^2u(T-s,\yy)
(\sigma_j(\yy)-\sigma_j(Y_n),\sigma_j(\yy)-\sigma_j(Y_n))\dd s\\
&\quad+ \frac{1}{2}\sum_{j=1}^m \ee \int_{t_n}^{t_{n+1}} D^2u(T-s,\yy)
(\sigma_j(\yy)-\sigma_j(Y_n),\sigma_j(Y_n))\dd s\\ 
&\quad+ \frac{1}{2}\sum_{j=1}^m \ee \int_{t_n}^{t_{n+1}} D^2u(T-s,\yy)
(\sigma_j(Y_n),\sigma_j(\yy)-\sigma_j(Y_n))\dd s=:J_{21}^n+J_{22}^n+J_{23}^n.
\end{align*}
For $J_{21}^n$, we apply Corollary \ref{cor-Du} with $\rho_2=\rho_3$ to obtain
\begin{align*}
|J_{21}^n |&\le  \int_{t_n}^{t_{n+1}} e^{C (T-s)} \sup_{s\in[0,T]}|\HH_{q-2}(X_s^{\yy})|_{L_\omega^{2\rho_1}} \|\sigma(\yy)-\sigma(Y_n)\|^2_{L_\omega^{2\rho_2}} \dd s,
\end{align*}
then using \eqref{sg-lip} and  H\"older inequality, for some $\rho_2 \ge 1$,
we have
\begin{align*}
\|\sigma(\yy(s))-\sigma(Y_n)\|_{L_\omega^{2\rho_2}} 
&\le \|\sigma(\yy(s))-\sigma(\yy_n)\|_{L_\omega^{2\rho_2}} +\|\sigma(\yy_n)-\sigma(Y_n)\|_{L_\omega^{2\rho_2}} \\
&\le C (1+\sup_{0\le n \le N}|Y_n|^{q}_{L_\omega^{2q\rho_2}}) \tau^{\frac{1}{2\rho_2}}.
\end{align*}
Setting $\rho_1=q/(q-2)$ and $\rho_2=q$ gives
\begin{align} \label{j21}
|J_{21}^n| \le C \Big(\int_{t_n}^{t_{n+1}} e^{C (T-s)}\dd s \Big)(1+\sup_{0\le n \le N}|Y_n|^{q^2}_{L_\omega^{2q^2}})\tau^{\frac{1}{q}}.
\end{align} 
The terms $J_{22}^n$ and $J_{23}^n$ are estimated in the same way. Therefore, taking $J_{22}^n$ as an example, using the Taylor expansion with a conditional expectation argument gives that
\begin{align*} 
J_{22}^n&= \frac{1}{2}\sum_{j=1}^m \ee \int_{t_n}^{t_{n+1}} D^2u(T-s,\yy)
(\sigma_j(\yy)-\sigma_j(\yy_n),\sigma_j(Y_n))\dd s\nonumber\\ 
&\quad+ \frac{1}{2}\sum_{j=1}^m \ee \int_{t_n}^{t_{n+1}} D^2u(T-s,\yy)
(\sigma_j(\yy_n)-\sigma_j(Y_n),\sigma_j(Y_n))\dd s\nonumber\\ 
&= \frac{1}{2}\sum_{j=1}^m \ee \int_{t_n}^{t_{n+1}} (s-t_n)D^2u(T-s,\yy_n)
(D\sigma_j(\yy_n)b(Y_n),\sigma_j(Y_n))\dd s \nonumber\\ 
&\quad+ \frac{1}{2}\sum_{j=1}^m \ee \int_{t_n}^{t_{n+1}} D^2u(T-s,\yy_n)
(\RR_{\sigma_j}(\yy,\yy_n),\sigma_j(Y_n))\dd s \nonumber\\ 
&\quad+ \frac{1}{2}\sum_{j=1}^m \ee \int_{t_n}^{t_{n+1}} (D^2u(T-s,\yy)-D^2u(T-s,\yy_n)) 
(\sigma_j(\yy)-\sigma_j(\yy_n),\sigma_j(Y_n))\dd s  \nonumber\\ 
&\quad+ \frac{1}{2}\sum_{j=1}^m \ee \int_{t_n}^{t_{n+1}} D^2u(T-s,\yy)
(\sigma_j(\yy_n)-\sigma_j(Y_n),\sigma_j(Y_n))\dd s \nonumber\\ 
&=:J_{221}^n+J_{222}^n+J_{223}^n+J_{224}^n.
\end{align*}
Applying Corollary \ref{cor-Du}, we obtain
\begin{align*}
|J_{221}^n| &\le  \tau \Big(\int_{t_n}^{t_{n+1}} e^{C (T-s)} \dd s\Big) \sup_{s\in[0,T]}|\HH_{q-2}(X_s^{\yy_n})|_{L_\omega^{2\rho_1}}  \|D\sigma(\yy_n)b(Y_n)\|_{L_\omega^{2\rho_2}}
 \|\sigma(Y_n)\|_{L_\omega^{2\rho_3}}.
\end{align*}
Choosing $\rho_1=(q^2-q+1)/(q^2-2q), \rho_2=(q^2-q+1)/q$ and $ \rho_3=q^2-q+1$, yields that
\begin{align*}
|J_{221}^n| &\le C \Big(\int_{t_n}^{t_{n+1}} e^{C (T-s)} \dd s\Big) (1+\sup_{0\le n \le N}|Y_n|^{q^2-q+1}_{L_\omega^{2q^2-2q+2}} )\tau.
\end{align*}
For $J_{222}^n$, one have
\begin{align*}
|J_{222}^n |&\le   \int_{t_n}^{t_{n+1}} e^{C (T-s)}\sup_{s\in[0,T]}|\HH_{q-2}(X_s^{\yy_n})|_{L_\omega^{2\rho_1}}
\sum_{j=1}^m|\RR_{\sigma_j}(\yy,\yy_n)|_{L_\omega^{2\rho_2}}|\sigma_j(Y_n)|_{L_\omega^{2\rho_3}} \dd s,
\end{align*}
where one gets  easily from \eqref{D3-v} and Lemma \ref{lm-yy}, for some $\rho_2 \ge 1, j\in\{1,\cdots,m\}$,
\begin{align*}
|\RR_{\sigma_j}(\yy(s),\yy_n)|_{L_\omega^{2\rho_2}} 
&\le \int_0^1 | (D\sigma_j(\yy_n+r(\yy(s)-\yy_n))-D\sigma_j(\yy_n))(\yy(s)-\yy_n)|_{L_\omega^{2\rho_2}} \dd r \\
&\le C  | (1+|\yy_n|+|\yy(s)|)^{q-2}|\yy(s)-\yy_n|^2|_{L_\omega^{2\rho_2}}  \\
&\le C (1+\sup_{0\le n \le N}|Y_n|^{q^2}_{L_\omega^{2\rho_2q}})\tau^{\frac{1}{\rho_2q}}.
\end{align*}
Thus, we have
\begin{align*}
|J_{222}^n| \le C \Big(\int_{t_n}^{t_{n+1}} e^{C (T-s)} \dd s\Big) (1+\sup_{0\le n \le N}|Y_n|^{q(2q-2)+1}_{L_\omega^{2q(2q-2)+2}}) \tau^{\frac{1}{q^2-q+1}}.
\end{align*}
Then, by the Taylor expansion and Corollary \ref{cor-Du}, we have
\begin{align*}
|J_{223}^n| 
&\le \frac{1}{2}\sum_{j=1}^m \ee \int_{t_n}^{t_{n+1}}\int_0^1 |D^3u(T-s,v_1(r)) (\yy-\yy_n,\sigma_j(\yy)-\sigma_j(\yy_n),\sigma_j(Y_n))|\dd r\dd s \\
&\le  \int_{t_n}^{t_{n+1}} e^{C (T-s)} \sup_{s\in[0,T]}|\HH_{q^*}(X_s^{v_1(r)})|_{L_\omega^{\max\{2\rho_1,\rho_3\rho_4\}}}|\yy-\yy_n|_{L_\omega^{2\rho_2}} \\
&\qquad \times \|\sigma(\yy)-\sigma(\yy_n)\|_{L_\omega^{2\rho_3\rho_5}}\|\sigma(Y_n)\|_{L_\omega^{2\rho_3\rho_6}}\dd s,
\end{align*}
where $v_1(r):=\yy_n+r(\yy(s)-\yy_n)$. Then combining with the condition \eqref{sg-lip} and Lemma \ref{lm-yy}, taking $\rho_1=(q^2+2)/(q^2-2q),\rho_2=(q^2+2)/q,\rho_3=(q^2+2)/(q+2),\rho_4=(q+2)/q$ and $\rho_5=\rho_6=q+2$, we have
\begin{align*}
|J_{223}^n |& \le C \Big(\int_{t_n}^{t_{n+1}} e^{C (T-s)} \dd s\Big) 
(1+\sup_{0\le n \le N}|Y_n|^{q^2+1}_{L_\omega^{2(q^2+2)}}) \tau^{\frac{q+1}{2q^2+4}}.
\end{align*}
By Corollary \ref{cor-Du} and the condition \eqref{sg-lip}, $J_{224}^n$ can be estimated as 
\begin{align*}
|J_{224}^n| & \le \Big(\int_{t_n}^{t_{n+1}} e^{C (T-s)} \dd s\Big) \sup_{s\in[0,T]}|\HH_{q-2}(X_s^{\yy})|_{L_\omega^{2\rho_1}}  \|\sigma(\yy_n)-\sigma(Y_n)\|_{L_\omega^{2\rho_2}}  \|\sigma(Y_n)\|_{L_\omega^{2\rho_3}}  \\
& \le C \Big(\int_{t_n}^{t_{n+1}} e^{C (T-s)} \dd s\Big) (1+\sup_{0\le n \le N}|Y_n|^{q^2-q+1}_{L_\omega^{(2q^2-2q+2)}})\tau,
\end{align*}
where $\rho_1=(q^2-q+1)/(q^2-2q),\rho_2=(q^2-q+1)/q$ and $\rho_3=q^2-q+1$. Hence, by these estimates of $|J_{221}^n|$ to $|J_{224}^n|$, we get
\begin{align} \label{j22-23}
|J_{22}^n|+|J_{23}^n| \le C \Big( \int_{t_n}^{t_{n+1}} e^{C (T-s)} \dd s\Big) (1+\sup_{0\le n \le N}|Y_n|^{q(2q-2)+1}_{L_\omega^{2q(2q-2)+2}}) \tau^{\frac{1}{q^2-q+1}}.
\end{align}
Then we conclude \eqref{J2} by Lemma \ref{lm-Y} and the estimates \eqref{j21} and \eqref{j22-23}.

In the case $\gamma=0$, for $J_{21}^n$, we apply Corollary \ref{cor-Du} with $\rho_2=\rho_3$ to obtain
\begin{align} \label{j21+}
|J_{21}^n |&\le C \int_{t_n}^{t_{n+1}} e^{-\alpha (T-s)} \sup_{s\ge 0}|\HH_{q-2}(X_s^{\yy})|_{L_\omega^{2\rho_1}}
\|\sigma(\yy)-\sigma(Y_n)\|^2_{L_\omega^{2\rho_2}}\dd s \nonumber  \\
& \le C \Big(\int_{t_n}^{t_{n+1}} e^{-\alpha (T-s)}\dd s\Big) (1+\sup_{n \ge 0}|Y_n|^{q^2}_{L_\omega^{2q^2}} )\tau.
\end{align} 
For $J_{22}^n$, applying Corollary \ref{cor-Du}, we obtain
\begin{align*}
|J_{221}^n| &\le  C \tau \Big(\int_{t_n}^{t_{n+1}} e^{-\alpha (T-s)}\dd s\Big) \sup_{s\ge 0}|\HH_{q-2}(X_s^{\yy_n})|_{L_\omega^{2\rho_1}} \|D\sigma(\yy_n)b(Y_n)\|_{L_\omega^{2\rho_2}} \|\sigma(Y_n)\|_{L_\omega^{2\rho_3}}  \\
&\le C \Big(\int_{t_n}^{t_{n+1}} e^{-\alpha (T-s)}\dd s\Big) (1+\sup_{n\ge0}|Y_n|^{q^2-q+1}_{L_\omega^{2q^2-2q+2}} )\tau,
\end{align*}
where $\rho_1=(q^2-q+1)/(q^2-2q), \rho_2=(q^2-q+1)/q$ and $ \rho_3=q^2-q+1$.
For $J_{222}^n$, one have
\begin{align*}
|J_{222}^n |&\le  \int_{t_n}^{t_{n+1}} e^{-\alpha (T-s)} \sup_{s\ge 0}|\HH_{q-2}(X_s^{\yy_n})|_{L_\omega^{2\rho_1}}
\sum_{j=1}^m|\RR_{\sigma_j}(\yy,\yy_n)|_{L_\omega^{2\rho_2}}|\sigma_j(Y_n)|_{L_\omega^{2\rho_3}}  \dd s\\
& \le C \Big(\int_{t_n}^{t_{n+1}} e^{-\alpha (T-s)}\dd s\Big) (1+\sup_{n\ge0}|Y_n|^{q(2q-2)+1}_{L_\omega^{2q(2q-2)+2}} )\tau.
\end{align*}
Then, by the Taylor expansion and Corollary \ref{cor-Du}, we have
\begin{align*}
|J_{223}^n| &\le C  \int_{t_n}^{t_{n+1}} e^{-\alpha (T-s)} \sup_{s\ge0}|\HH_{q^*}(X_s^{v_1(r)})|_{L_\omega^{\max\{2\rho_1 ,2\rho_3 \rho_4\}}} \\
&\qquad  \times |\yy-\yy_n|_{L_\omega^{2\rho_2}}\|\sigma(\yy)-\sigma(\yy_n)\|_{L_\omega^{2\rho_3\rho_5}}\|\sigma(Y_n)\|_{L_\omega^{2\rho_3\rho_6}} \dd s\\
& \le C \Big(\int_{t_n}^{t_{n+1}} e^{-\alpha (T-s)}\dd s\Big) (1+\sup_{n\ge0}|Y_n|^{q^2+1}_{L_\omega^{2(q^2+2)}} )\tau,
\end{align*}
where we set $v_1(r):=\yy_n+r(\yy(s)-\yy_n)$, $\rho_1=(q^2+2)/(q^2-2q),\rho_2=(q^2+2)/q$,$\rho_3=(q^2+2)/(q+2)$,$\rho_4=(q+2)/q$ and $\rho_5=\rho_6=q+2$. By Corollary \ref{cor-Du} and the condition \eqref{sg-lip}, $J_{224}^n$ can be estimated as 
\begin{align*}
|J_{224}^n| & \le  C \Big(\int_{t_n}^{t_{n+1}} e^{-\alpha (T-s)}\dd s\Big) \sup_{s\ge0}|\HH_{q-2}(X_s^{\yy})|_{L_\omega^{2\rho_1}}  \|\sigma(\yy_n)-\sigma(Y_n)\|_{L_\omega^{2\rho_2}}  \|\sigma(Y_n)\|_{L_\omega^{2\rho_3}} \\
& \le C \Big(\int_{t_n}^{t_{n+1}} e^{-\alpha (T-s)}\dd s\Big) 
(1+\sup_{n\ge0}|Y_n|^{q^2-q+1}_{L_\omega^{(2q^2-2q+2)}})\tau,
\end{align*}
where $\rho_1=(q^2-q+1)/(q^2-2q),\rho_2=(q^2-q+1)/q$ and $\rho_3=q^2-q+1$. Hence, by these estimates of $|J_{221}^n|$ to $|J_{224}^n|$, we get
\begin{align} \label{j22-23+}
|J_{22}^n| + |J_{23}^n| \le C \Big(\int_{t_n}^{t_{n+1}} e^{-\alpha (T-s)}\dd s\Big) 
(1+\sup_{n\ge0}|Y_n|^{q(2q-2)+1}_{L_\omega^{2q(2q-2)+2}}) \tau.
\end{align}
 Then we conclude \eqref{J2+} by Lemma \ref{lm-Y} and the estimates of \eqref{j21+} and \eqref{j22-23+}.
\end{proof}

\vspace{0.5em}

Next, we estimate the last two terms, which vanish in the jump-free case.

\vspace{0.5em}

\begin{lem}
\label{lm-J3}
Assume $X_0\in L^{2q^2}(\Omega)$ and let Assumptions \ref{A3} and \ref{A4} hold.
 Then there exists a positive constant $C$ such that
\begin{align}
\label{J3}
\sum_{n=0}^{N-1}|J_3^n| \le e^{CT}(1+|X_0|^{q^2}_{L_\omega^{2q^2}})\tau^{\frac{1}{q}}.
\end{align}
\end{lem}
\begin{proof}
For the term $J_3^n$,  we can decompose it as
\begin{align*}
-J_3^n&=\ee \int_{t_n}^{t_{n+1}} \int_Z u(T-s,\yy+\gamma(\yy_n,z))-u(T-s,\yy+\gamma(Y_n,z))\nu(\dd z)\dd s \\
&\quad+ \ee \int_{t_n}^{t_{n+1}} \int_Z u(T-s,\yy+\gamma(\yy,z))-u(T-s,\yy+\gamma(\yy_n,z))\nu(\dd z)\dd s =:J_{31}^n+J_{32}^n.
\end{align*}
For the term $J_{31}^n$, by Corollary \ref{cor-Du} and the condition \eqref{sg-lip}, we have
\begin{align} \label{j31}
|J_{31}^n| &\le  \int_{t_n}^{t_{n+1}} \int_Z e^{C (T-s)}|\gamma(\yy_n,z)-\gamma(Y_n,z)|_{L_\omega^2} \nu(\dd z)\dd s \nonumber \\
&\le C \Big(\int_{t_n}^{t_{n+1}} e^{C (T-s)} \dd s\Big) 
(1+\sup_{0\le n \le N}|Y_n|^{q}_{L_\omega^{2q}} ) \tau.
\end{align}
 Denote $v_2(r):=\yy(s)+\gamma(\yy_n,z)+r(\gamma(\yy(s),z)-\gamma(\yy_n,z))$. Using the Taylor expansion for $u(t,\cdot)$, yields 
 \begin{align*}
  J_{32}^n 
 &=\ee \int_{t_n}^{t_{n+1}} \int_Z \int_{0}^1Du(T-s,v_2(r))(\gamma(\yy,z)-\gamma(\yy_n,z))\dd r\nu(\dd z)\dd s \\
 &=\ee \int_{t_n}^{t_{n+1}} \int_Z \int_{0}^1(Du(T-s,v_2(r))-Du(T-s,v_2(r)+\yy_n-\yy))\\
 &\qquad
 (\gamma(\yy,z)-\gamma(\yy_n,z))
 \dd r\nu(\dd z)\dd s \\
 &\quad+\ee \int_{t_n}^{t_{n+1}} \int_Z \int_{0}^1(Du(T-s,v_2(r)+\yy_n-\yy)-Du(T-s,\yy_n+\gamma(\yy_n,z)))\\
 &\qquad
 (\gamma(\yy,z)-\gamma(\yy_n,z))
 \dd r\nu(\dd z)\dd s \\
 &\quad +\ee \int_{t_n}^{t_{n+1}} \int_Z Du(T-s,\yy_n+\gamma(\yy_n,z))
 (\gamma(\yy,z)-\gamma(\yy_n,z))
\nu(\dd z)\dd s \\
 &=:J_{321}^n+J_{322}^n+J_{323}^n.
 \end{align*}
 For the term $J_{321}^n$, setting $v_3(\widetilde{r})=v_2(r)+\yy_n-\yy(s)+\widetilde{r}(\yy_n-\yy(s))$, using the Taylor expansion and Corollary \ref{cor-Du}, we obtain
 \begin{align*}
 | J_{321} ^n| 
&\le \ee \int_{t_n}^{t_{n+1}} \int_Z \int_0^1 \int_0^1|D^2u(T-s,v_3(\widetilde{r}))(\yy-\yy_n, 
 \gamma(\yy,z)-\gamma(\yy_n,z))| \dd \widetilde{r}  \dd r\nu(\dd z)\dd s \\
 &\le \int_{t_n}^{t_{n+1}} e^{C (T-s)} \sup_{\widetilde{r} \in [0, 1]} \sup_{s\in[0,T]}|\HH_{q-2}(X_s^{v_3(\widetilde{r})})|_{L_\omega^{2\rho_1}}|\yy-\yy_n|_{L_\omega^{2\rho_2}}  \\
 &\qquad \times \int_Z  |\gamma(\yy,z)-\gamma(\yy_n,z)|_{L_\omega^{2\rho_3}} \nu(\dd z)\dd s \\
 &\le C \Big(\int_{t_n}^{t_{n+1}} e^{C (T-s)} \dd s\Big) 
 (1+\sup_{0\le n \le N}|Y_n|^{q^2}_{L_\omega^{2q^2}} ) \tau^{\frac{1}{q}},
 \end{align*}
 where $\rho_1=q/(q-2)$ and$\rho_2=\rho_3=q$.
 Similarly, for the estimate of $J_{322}^n$, denoting $v_4(\widetilde{r})=\yy_n+\gamma(\yy_n,z)+\widetilde{r}r(\gamma(\yy(s),z)-\gamma(\yy_n,z))$,  and combining Corollary \ref{cor-Du} with $\rho_2=\rho_3$, we have
 \begin{align*}
|  J_{322}^n |&\le \ee \int_{t_n}^{t_{n+1}} \int_Z \int_0^1 \int_0^1|D^2u(T-s,v_4(\widetilde{r}))(r(\gamma(\yy,z)-\gamma(\yy_n,z)), \\
&\qquad \qquad \qquad \qquad \qquad  \gamma(\yy,z)-\gamma(\yy_n,z))| \dd \widetilde{r}  \dd r\nu(\dd z)\dd s \\
&\le \int_{t_n}^{t_{n+1}}\int_Z e^{C (T-s)} \sup_{\widetilde{r} \in [0, 1]} \sup_{s\in[0,T]} |\HH_{q-2}(X_s^{v_4(\widetilde{r})})|_{L_\omega^{2\rho_1}} |\gamma(\yy,z)-\gamma(\yy_n,z)|^2_{L_\omega^{2\rho_2}} \nu(\dd z)\dd s \\
 &\le C \Big( \int_{t_n}^{t_{n+1}} e^{C (T-s)} \dd s \Big)(1+\sup_{0\le n \le N}|Y_n|^{q^2}_{L_\omega^{2q^2}})\tau^{\frac{1}{q}},
 \end{align*}
 where choosing $\rho_1=q/(q-2)$ and $\rho_2=\rho_3=q$.
 For $J_{323}^n$, by the Taylor expansion and taking the conditional expectation,  one gets
 \begin{align*}
 J_{323}^n&= \ee  \int_{t_n}^{t_{n+1}}\int_Z \<Du(T-s,\yy_n+\gamma(\yy_n,z)),D\gamma(\yy_n,z)b(Y_n)(s-t_n)\> \nu(\dd z)\dd s  \\
&\quad+ \ee  \int_{t_n}^{t_{n+1}}\int_Z \<Du(T-s,\yy_n+\gamma(\yy_n,z)),  \RR_{\gamma}(\yy,\yy_n)\> \nu(\dd z)\dd s,
\end{align*}
where $\RR_{\gamma}(\yy(s),\yy_n):=\int_0^1 (D\gamma(\yy_n+r(\yy(s)-\yy_n),z)-D\gamma(\yy_n,z))(\yy(s)-\yy_n)\dd r.$
By the condition \eqref{D3-v},  and Corollary  \ref{cor-Du}, we have
\begin{align*}
 |J_{323}^n| &\le  \int_{t_n}^{t_{n+1}}\int_Z e^{C (T-s)} (s-t_n)|D\gamma(\yy_n,z)b(Y_n)|_{L_\omega^2} \nu(\dd z)\dd s \\
 &\quad +\int_{t_n}^{t_{n+1}}\int_Z e^{C (T-s)}|\RR_{\gamma}(\yy,\yy_n)|_{L_\omega^2} \nu(\dd z)\dd s \\
 &\le C \Big( \int_{t_n}^{t_{n+1}} e^{C (T-s)} \dd s \Big)
 (1+\sup_{0\le n \le N}|Y_n|^{q^2}_{L_\omega^{2q^2}})\tau^{\frac{1}{q}}.
\end{align*}
 According to the estimates of  $ |J_{321}^n|-|J_{323}^n|$, it is not hard to get
 \begin{align} \label{j32}
| J_{32}^n| \le C \Big( \int_{t_n}^{t_{n+1}} e^{C (T-s)} \dd s\Big) (1+\sup_{0\le n \le N}|Y_n|^{q^2}_{L_\omega^{2q^2}})\tau^{\frac{1}{q}}.
 \end{align} 
Then we conclude \eqref{J3} by Lemma \ref{lm-Y} and the estimates \eqref{j31} and \eqref{j32} . 
\end{proof}

\vspace{0.5em}

\begin{lem}
\label{lm-J4}
Assume $X_0\in L^{2q^2}(\Omega)$ and let Assumptions  \ref{A3} and \ref{A4} hold. There exists a positive constant $C$ such that
\begin{align}
\label{J4}
\sum_{n=0}^{N-1}| J_4^n| \le  e^{C T}  (1+|X_0|^{q^2}_{L_\omega^{2q^2}})\tau^{\frac{1}{q}}.
\end{align}
\end{lem}
\begin{proof}
 For the estimate of $J_4^n$, by a further decomposition, we get
 \begin{align*}
 J_4^n&=\ee \int_{t_n}^{t_{n+1}} \int_Z Du(T-s,\yy)(\gamma(\yy_n,z)-\gamma(Y_n,z))\nu(\dd z)\dd s \\
 &\quad+\ee \int_{t_n}^{t_{n+1}} \int_Z Du(T-s,\yy_n)(\gamma(\yy,z)-\gamma(\yy_n,z))\nu(\dd z)\dd s \\
  &\quad+\ee \int_{t_n}^{t_{n+1}} \int_Z (Du(T-s,\yy)-Du(T-s,\yy_n))(\gamma(\yy,z)-\gamma(\yy_n,z))\nu(\dd z)\dd s \\
& =:J_{41}^n+J_{42}^n+J_{43}^n.
 \end{align*}
 By Corollary \ref{cor-Du} and \eqref{sg-lip}, we have
 \begin{align} \label{j41}
 |J_{41}^n|& \le C  \int_{t_n}^{t_{n+1}} e^{C (T-s)} |\gamma(\yy_n,z)-\gamma(Y_n,z) |_{L_\omega^{2}}\dd s \nonumber \\
 &\le C\Big( \int_{t_n}^{t_{n+1}} e^{C (T-s)} \dd s \Big)(1+\sup_{0\le n \le N}|Y_n|^{q}_{L_\omega^{2q}})\tau.
 \end{align}
 For $J_{42}^n$, from the Taylor expansion one obtains
 \begin{align*}
 J_{42}^n&=\ee  \int_{t_n}^{t_{n+1}}\int_Z \<Du(T-s,\yy_n), 
 D\gamma(\yy_n,z)b(Y_n)(s-t_n)\>  \nu(\dd z)\dd s\\
 &\quad +\ee  \int_{t_n}^{t_{n+1}}\int_Z \<Du(T-s,\yy_n),\RR_{\gamma}(\yy,\yy_n)\> \nu(\dd z)\dd s.
\end{align*}
By Corollary  \ref{cor-Du} and the condition \eqref{D3-v}, we have
\begin{align} \label{j42}
 |J_{42}^n| &\le  \int_{t_n}^{t_{n+1}}\int_Z e^{C (T-s)} (s-t_n)|D\gamma(\yy_n,z)b(Y_n)|_{L_\omega^2} \nu(\dd z)\dd s \nonumber \\
 &\quad +\int_{t_n}^{t_{n+1}}\int_Z e^{C (T-s)}|\RR_{\gamma}(\yy,\yy_n)|_{L_\omega^2} \nu(\dd z)\dd s \nonumber \\
 &\le C\Big(  \int_{t_n}^{t_{n+1}} e^{C (T-s)} \dd s \Big)(1+\sup_{0\le n \le N}|Y_n|^{q^2}_{L_\omega^{2q^2}})\tau^{\frac{1}{q}}.
\end{align}
For the term $J_{43}^n$, setting $v_5(r)=\yy_n+r(\yy_n-\yy(s))$, using the Taylor expansion and Corollary \ref{cor-Du}, we obtain
 \begin{align} \label{j43}
| J_{43}^n | 
 &\le \ee \int_{t_n}^{t_{n+1}} \int_Z \int_0^1|D^2u(T-s,v_5(r))(\yy-\yy_n,   \gamma(\yy,z)-\gamma(\yy_n,z)) | \dd r\nu(\dd z)\dd s \nonumber \\
 &\le \int_{t_n}^{t_{n+1}} \int_Ze^{C (T-s)} \sup_{s\in[0,T]}|\HH_{q-2}(X_s^{v_5(r)})|_{L_\omega^{2\rho_1}}|\yy-\yy_n|_{L_\omega^{2\rho_2}} \nonumber  \\
 &\qquad\qquad\qquad \times |\gamma(\yy,z)-\gamma(\yy_n,z)|_{L_\omega^{2\rho_3}} \nu(\dd z)\dd s \nonumber \\
 &\le C \Big( \int_{t_n}^{t_{n+1}} e^{C (T-s)} \dd s\Big) (1+\sup_{0\le n \le N}|Y_n|^{q^2}_{L_\omega^{2q^2}})\tau^{\frac{1}{q}},
 \end{align}
 where $\rho_1=q/(q-2)$ and$\rho_2=\rho_3=q$. 
Then we conclude \eqref{J4} by Lemma \ref{lm-Y} and the estimates \eqref{j41}, \eqref{j42}, and \eqref{j43}. 
\end{proof}

\vspace{0.5em}

With the above four estimates, we establish the following weak error estimates for the BEM \eqref{bem} relative to both the jump-diffusion and jump-free SODEs \eqref{sde}.

\vspace{0.5em}

\begin{thm}
\label{tm-weak}
Assume $X_0\in L^{2q(2q-2)+2}(\Omega)$ and $\varphi\in C_b^3(\rr^d)$ and let Assumptions \ref{A3} and \ref{A4} hold.
There exists a constant $C > 0$ such that
\begin{align} \label{weak-err}
|\ee \varphi(X_t^{\cdot})-\ee\varphi( Y_N^{X_0})| 
&\le e^{CT} (1+|X_0|^{q(2q-2)+1}_{L_\omega^{2q(2q-2)+2}})\tau^{\frac{1}{q^2-q+1}}.
\end{align}  
Moreover, if $\gamma=0$ and Assumption \ref{A3*} holds with $2L_4^*>(2p-1)L_5$ and  $L_8>L_6\cdot C_p$ for some $p \in \nn_+$, where $C_p$ is the constant in Lemma \ref{lm-Y}, then  
Then  
\begin{align} \label{weak-err+}
|\ee \varphi(X_t^{\cdot})-\ee\varphi( Y_N^{X_0})| 
&\le C(1+|X_0|^{q(2q-2)+1}_{L_\omega^{2q(2q-2)+2}})\tau.
\end{align} 
\end{thm}

\begin{proof}
Let $N \in \nn_+$.
As
$\ee \varphi(X_t^{\cdot}) =u(T, \cdot)$ and 
$\ee\varphi( Y_N^{X_0})=\ee \varphi(X_0^{Y_N^{X_0}})=u(0,Y_N^{X_0}),$
we can separate the weak error into three parts:
\begin{align*}
 |\ee\varphi(X_t^{\cdot})-\ee Y_N^{X_0}  | 
&= |\ee u(T, \cdot)-\ee u(0,Y_N^{X_0}) | \\
&\le |\ee u(0,\yy_N)  -\ee u(0,Y_N^{X_0})|
+ |\ee u(T, \cdot) -\ee u(T,\yy_0) | \\
&\quad+ | \ee u(T,\yy_0)-\ee u(0,\yy_N)| 
=:H_1+H_2+H_3.
\end{align*}
For the term $H_1$, by the construction of $\yy_N$ and the condition \eqref{D3-v}, we have
\begin{align} \label{h1}
H_1 \le \tau \ee|b(Y_N)|_{L_\omega^{2}} 
\le e^{C T}(1+|X_0|^{q}_{L_\omega^{2q}})\tau.
\end{align} 
Using the identity $\yy_0=X_0- b(X_0)\tau$, and Corollary \ref{cor-Du}, $H_2$ can be bounded as
\begin{align} \label{h2}
H_2 \le e^{C T}|\yy_0-X_0|_{L_\omega^2}
\le e^{C T}(1+|X_0|^q_{L_\omega^{2q}})\tau.
\end{align}
For the last term $H_3$, using a telescoping sum argument shows that
\begin{align*}
H_3&=\Big| \sum_{n=0}^{N-1}\ee u(T-t_{n+1},\yy_{n+1}) - \ee u(T-t_n,\yy_n) \Big|. 
\end{align*}
By the associated Kolmogorov equation \eqref{kol},  It\^o formula and \eqref{yy}, we observe that, for every $n \in\{0,1,\cdots,N-1\}$,
\begin{align*}
&u(T-t_{n+1},\yy_{n+1})-u(T-t_n,\yy_n) \\
&=- \int_{t_n}^{t_{n+1}} \partial_su(T-s,\yy) \dd s+\int_{t_n}^{t_{n+1}} Du(T-s,\yy)b(Y_n) \dd s \\
&\quad+\sum_{j=1}^m \int_{t_n}^{t_{n+1}} Du(T-s,\yy) \sigma_j(Y_n) \dd W_{j,s} + \int_{t_n}^{t_{n+1}} \int_Z Du(T-s,\yy) \gamma(Y_n,z) \widetilde{N}(\dd s, \dd z)  \\
&\quad+ \frac{1}{2}\sum_{j=1}^m  \int_{t_n}^{t_{n+1}} D^2u(T-s,\yy)
(\sigma_j(Y_n),\sigma_j(Y_n)) \dd s \\
& \quad+ \int_{t_n}^{t_{n+1}} \int_Z ( u(T-s,\yy+\gamma (Y_n,z))-u(T-s,\yy) 
-Du(T-s,\yy) \gamma(Y_n,z) )N (\dd s, \dd z) .
\end{align*}
Taking the conditional expectation argument, we have 
\begin{align*}
&\ee u(T-t_{n+1},\yy_{n+1})-\ee u(T-t_n,\yy_n) \\
&= \ee \int_{t_n}^{t_{n+1}} Du(T-s,\yy)(b(Y_n)-b(\yy)) \dd s \\
&\quad+ \frac{1}{2}\ee \int_{t_n}^{t_{n+1}} D^2u(T-s,\yy)
(\sigma_j(Y_n),\sigma_j(Y_n))
-D^2u(T-s,\yy)(\sigma_j(\yy),\sigma_j(\yy))\dd s  \\
&\quad+\ee \int_{t_n}^{t_{n+1}} \int_Z u(T-s,\yy+\gamma(Y_n,z))-u(T-s,\yy+\gamma(\yy,z))\nu(\dd z)\dd s\\
&\quad+\ee \int_{t_n}^{t_{n+1}} \int_Z Du(T-s,\yy)(\gamma(\yy,z)-\gamma(Y_n,z))\nu(\dd z)\dd s =:J_1^n+J_2^n+J_3^n+J_4^n.
\end{align*}
 According to Lemmas \ref{lm-J1}-\ref{lm-J4},  we obtain
 \begin{align} \label{h3}
H_3  & \le \sum_{n=0}^{N-1}(|J_1^n|+|J_2^n|+|J_3^n|+|J_4^n|)
\le  e^{CT} (1+|X_0|^{q(2q-2)+1}_{L_\omega^{2q(2q-2)+2}})\tau^{\frac{1}{q^2-q+1}}.
 \end{align}
Combining the estimates \eqref{h1}, \eqref{h2}, and \eqref{h3}, we conclude \eqref{weak-err}.

In the case $\gamma=0$, for the term $H_1$, we have
\begin{align} \label{h1+}
H_1 \le C(1+|X_0|^{q}_{L_\omega^{2q}})\tau.
\end{align} 
Using the identity $\yy_0=X_0- b(X_0)\tau$ and Corollary \ref{cor-Du},  
\begin{align} \label{h2+}
H_2 \le Ce^{-\alpha t}|\yy_0-X_0|_{L_\omega^2}
\le C(1+|X_0|^q_{L_\omega^{2q}})\tau.
\end{align}
For the last term $H_3$, using a telescoping sum argument, we derive 
\begin{align*}
H_3&=\Big| \sum_{n=0}^{N-1}\ee u(T-t_{n+1},\yy_{n+1})-\ee u(T-t_n,\yy_n) \Big|. 
\end{align*} 
By the associated Kolmogorov equation \eqref{kol},  It\^o formula and \eqref{yy},  we obtain
\begin{align*}
&\ee u(T-t_{n+1},\yy_{n+1})-\ee u(T-t_n,\yy_n) \\
&= \ee \int_{t_n}^{t_{n+1}} Du(T-s,\yy)(b(Y_n)-b(\yy)) \dd s \\
&\quad+ \frac{1}{2}\ee \int_{t_n}^{t_{n+1}} D^2u(T-s,\yy)
(\sigma_j(Y_n),\sigma_j(Y_n))
-D^2u(T-s,\yy)(\sigma_j(\yy),\sigma_j(\yy))\dd s.
\end{align*}
 According to Lemmas \ref{lm-J1} and \ref{lm-J2},  we arrive at
 \begin{align} \label{h3+}
H_3 \le  C (1+|X_0|^{q(2q-2)+1}_{L_\omega^{2q(2q-2)+2}})\tau.
 \end{align}
We conclude \eqref{weak-err+} by collecting the estimates \eqref{h1+}, \eqref{h2+}, and \eqref{h3+}.
\end{proof}

\vspace{0.5em}

\begin{rem}
(1) To our knowledge, higher moment bounds of the Poisson increments $\int_{t_i}^{t_{i+1}} \int_Z  \widetilde{N}(\dd s, \dd z)$ contribute at most $\mathcal O(\tau)$ \cite{CG20}, which yields $\frac{1}{2q_2}$-H\"older continuity of the jump component in the $L^{2q_2}_\omega$-sense  (see Lemma \ref{lm-yy}(1)).
    This leads to the estimates in Lemmas \ref{lm-J1}-\ref{lm-J4} and thus the dependence of the weak convergence rate on $q$ in \eqref{weak-err}.

(2) For the non-contractive case, we establish a uniform-in-time weak error estimate for the proposed schemes applied to jump-free SODEs in \cite{LWWZ25}.  
\end{rem}

\vspace{0.5em}

As a by-product of the uniform weak error estimates \eqref{weak-err+} in Theorem \ref{tm-weak}, we have the following optimal one-order convergence rate between the exact invariant measure $\pi$ of Eq. \eqref{sde} and the numerical one of the BEM \eqref{bem} in the jump-free case, which answers a question left in \cite{LL25}. 

\vspace{0.5em}

\begin{cor}\label{cor-inv}
Let Assumptions \ref{A3} and \ref{A4} hold. 
Assume that $b$ is continuous,  $2L_4^*>(2p-1)L_5$, and  $L_8>L_6\cdot C_p$ for some $p \in \nn_+$, where $C_p$ is the constant in Lemma \ref{lm-Y}. 
Then for any $\tau \in (0, 1)$ with $(2L_4^*-L_5)\tau \le 1$ and $\varphi \in C_b^3(\rr^d)$, there exists a constant $C$ such that 
\begin{align*} 
|\pi(\varphi)-\pi_\tau(\varphi)| \le C \tau.
\end{align*} 
\end{cor}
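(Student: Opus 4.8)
The plan is to interpose, between $\pi$ and $\pi_\tau$, the laws of the exact flow and of the BEM launched from the origin and run to a long horizon $T=N\tau$, and then to let $N\to\infty$ with the step-size $\tau$ held fixed. Concretely, for $\phi\in C_b^3(\rr^d)\subset\mathrm{Lip}_b(\rr^d)$ I would start from the triangle inequality
\begin{align*}
|\pi(\phi)-\pi_\tau(\phi)|
&\le \big|\pi(\phi)-\ee[\phi(X_T^{0})]\big|
+\big|\ee[\phi(X_T^{0})]-\ee[\phi(Y_N^{0})]\big|
+\big|\ee[\phi(Y_N^{0})]-\pi_\tau(\phi)\big| \\
&=: A_1+A_2+A_3 ,
\end{align*}
where $X_t^{0}$ and $Y_n^{0}$ are the exact and BEM solutions issued from the deterministic point $X_0=0$. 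Before anything else I would record how the corollary's hypotheses feed the earlier results in the jump-free case: \eqref{coup-} and \eqref{sg-lip} together with $2L_4^*>(2p-1)L_5\ge L_5$ give Assumption~\ref{A2} with $L_1^*=2L_4^*-L_5>0$, so Theorem~\ref{tm-inv} with $\theta=1$ applies precisely under $L_1^*\theta^2\tau\le 3-2\theta$, i.e.\ under the assumed $(2L_4^*-L_5)\tau\le1$, yielding the existence and uniqueness of $\pi_\tau$; the invariant measure $\pi$ of \eqref{sde} exists by Krylov--Bogolyubov (Feller property plus the uniform moment bound \eqref{X-est+}) and is unique by the contraction used for $A_1$ below.

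The decisive step is the central term $A_2$, handled by the uniform-in-time weak estimate of Theorem~\ref{tm-weak}(2). Taking $X_0=0$ makes $|X_0|_{L_\omega^p}=0$ for every $p$, and the hypotheses of that theorem — $\gamma=0$, Assumptions~\ref{A3}, \ref{A3*}, \ref{A4}, $2L_4^*>(2p-1)L_5$, $2L_9>(2p-1)L_6$ — are exactly those of the corollary, so \eqref{weak-err+} gives $A_2\le C\tau$ with $C$ independent of $N$ (equivalently of $T=N\tau$), since \eqref{weak-err+} carries no factor growing in $T$. This is the only place the full third-order apparatus — the variation-process estimates of Lemma~\ref{lm-DX}, the time-uniform Kolmogorov regularity of Lemma~\ref{lm-Du}(2), and the term-by-term bounds of Lemmas~\ref{lm-J1} and \ref{lm-J2} — is actually used; everything else is soft.

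For the two outer terms I would only need exponential decay in $T$. For $A_1$ I would use Corollary~\ref{cor-uxy}(2): with $u(t,\cdot)=\ee[\phi(X_t^{\cdot})]$ one has $|u(t,0)-u(t,y)|_{L_\omega^1}\le Ce^{-\alpha_1 t}|y|_{L_\omega^2}$, so by invariance of $\pi$ and its finite first moment (again from \eqref{X-est+}),
\begin{align*}
A_1=\Big|\int_{\rr^d}\big(u(T,0)-u(T,y)\big)\,\pi(\dd y)\Big|
\le Ce^{-\alpha_1 T}\int_{\rr^d}|y|\,\pi(\dd y),
\end{align*}
which tends to $0$ as $N\to\infty$. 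For $A_3$, Theorem~\ref{tm-inv} with $\theta=1$ makes the BEM exponentially mixing; quantitatively, the continuous-dependence estimate \eqref{depen+} at $\theta=1$, $x=0$, combined with invariance of $\pi_\tau$ and the Cauchy--Schwarz inequality, gives
\begin{align*}
A_3\le \frac{|\phi|_{\mathrm{Lip}}\,e^{-L_1^* t_N/4}}{(1+L_1^*\tau)^{1/2}}\int_{\rr^d}\big|y-\tau(b(y)-b(0))\big|\,\pi_\tau(\dd y)
\le \frac{|\phi|_{\mathrm{Lip}}\,e^{-L_1^* t_N/4}}{(1+L_1^*\tau)^{1/2}}\Big[\Big(\int_{\rr^d}V_1(y)\,\pi_\tau(\dd y)\Big)^{1/2}+\tau|b(0)|\Big],
\end{align*}
with $V_1$ the Lyapunov function \eqref{lya} at $\theta=1$; the bracketed quantity is finite for the fixed $\tau$ (Theorem~\ref{tm-lya} applies here — under the dissipativity \eqref{coup-}, \eqref{sg-lip} the monotone condition \eqref{mon-} holds with an arbitrarily small constant, so $L_1\theta\tau<2$ is automatic — giving $\int V_1\,\dd\pi_\tau\le 2L_2/L_3<\infty$ from \eqref{lya+}), so $A_3\to0$ as $N\to\infty$. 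Letting $N\to\infty$ in the three-term bound with $\tau$ fixed annihilates $A_1$ and $A_3$ and leaves $|\pi(\phi)-\pi_\tau(\phi)|\le C\tau$.

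I expect the main obstacle to be bookkeeping rather than a new estimate: one must check that all three borrowed results — the uniform-in-time weak estimate Theorem~\ref{tm-weak}(2), the uniform contraction Corollary~\ref{cor-uxy}(2), and the numerical exponential mixing Theorem~\ref{tm-inv} — are genuinely available in the jump-free case under the single package of hypotheses stated in the corollary, which amounts to translating the dissipativity/Lipschitz constants $L_4,L_4^*,L_5,L_6,L_9$ into the constants $L_1,L_1^*,L_2,L_3$ of Assumptions~\ref{A1}--\ref{A2}. The one genuinely delicate point inside this bookkeeping is making sure the invariant measures carry enough moments for the quantities $\int_{\rr^d}|y|\,\pi(\dd y)$ and $\int_{\rr^d}|y-\tau(b(y)-b(0))|\,\pi_\tau(\dd y)$ in the decay bounds to be finite; for $\pi$ this is immediate from \eqref{X-est+}, and for $\pi_\tau$ it follows from the Lyapunov bound \eqref{lya+} (equivalently \eqref{Y-est+}) together with the polynomial growth \eqref{b-grow} of $b$.
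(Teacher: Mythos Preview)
Your proposal is correct and is precisely the standard argument the paper has in mind: the corollary is stated with a bare \qed, the paper calling it ``a by-product of the uniform weak error estimates in Theorem~\ref{tm-weak}'', and your three-term decomposition---uniform-in-$T$ weak error for the middle term via Theorem~\ref{tm-weak}(2), exponential mixing of the exact flow and of the BEM for the outer terms via Corollary~\ref{cor-uxy}(2) and Theorem~\ref{tm-inv}---is exactly how one unpacks that by-product. Your bookkeeping translating $L_4^*,L_5$ into $L_1^*=2L_4^*-L_5$ so that the step-size restriction $(2L_4^*-L_5)\tau\le1$ matches the hypothesis $L_1^*\theta^2\tau\le 3-2\theta$ of Theorem~\ref{tm-inv} at $\theta=1$ is also correct.
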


\section*{Acknowledgments}
 
The authors are supported by Guangdong Basic and Applied Basic Research Foundation, No. 2024A1515012348, and Shenzhen Basic Research Special Project (Natural Science Foundation) Basic Research (General Project), Nos. JCYJ20220530112814033 and JCYJ20240813094919026.
We also thank the referees for numerous constructive suggestions.

\bibliographystyle{amsplain}
\bibliography{bib.bib}
\end{document}